\documentclass[11pt]{article}%
\usepackage{mathrsfs}
\usepackage{bbm}
\usepackage{amsfonts}
\usepackage{amsmath,amssymb}
\usepackage{amsmath}
\usepackage{amssymb}
\usepackage{graphicx}%
\usepackage{shorttoc}
\setcounter{MaxMatrixCols}{30}
\providecommand{\U}[1]{\protect \rule{.1in}{.1in}}
\setlength{\hoffset}{-0.4mm} \setlength{\voffset}{-0.4mm}
\setlength{\textwidth}{160mm} \setlength{\textheight}{235mm}
\setlength{\topmargin}{0mm} \setlength{\oddsidemargin}{0mm}
\setlength{\evensidemargin}{0mm} \setlength \arraycolsep{1pt}
\setlength{\headsep}{0mm} \setlength{\headheight}{0mm}
\newtheorem{theorem}{Theorem}[section]
\newtheorem{corollary}[theorem]{Corollary}
\newtheorem{definition}[theorem]{Definition}

\newtheorem{lemma}[theorem]{Lemma}
\newtheorem{proposition}[theorem]{Proposition}
\newtheorem{remark}[theorem]{Remark}

\newenvironment{proof}[1][Proof]{\noindent \textbf{#1.} }{\  $\Box$}
\numberwithin{equation}{section}

\begin{document}

\title{\textbf{Properties of $G$-martingales with finite variation and  the application to  $G$-Sobolev spaces }}
\author{ Yongsheng Song\thanks{%
Academy of Mathematics and Systems Science, CAS, Beijing, China,
yssong@amss.ac.cn. Research supported  by NCMIS;
Key Project of NSF (No. 11231005); Key Lab of Random Complex
Structures and Data Science, CAS (No. 2008DP173182).}  }

\date{\today}
\maketitle

\begin{abstract} As is known, a process of form $\int_0^t\eta_sd\langle B\rangle_s-\int_0^t2G(\eta_s)ds$, $\eta\in M^1_G(0,T)$, is a non-increasing $G$-martingale. In this paper, we shall show that a non-increasing $G$-martingale could not be form of $\int_0^t\eta_sds$ or $\int_0^t\gamma_sd\langle B\rangle_s$, $\eta, \gamma \in M^1_G(0,T)$, which implies that the decomposition for generalized $G$-It\^o processes is unique: For $\zeta\in H^1_G(0,T)$, $\eta\in M^1_G(0,T)$ and non-increasing $G$-martingales $K, L$, if \[\int_0^t\zeta_s dB_s+\int_0^t\eta_sds+K_t=L_t,\ t\in[0,T],\] then we have $\eta\equiv0$, $\zeta\equiv0$ and $K_t=L_t$.  As an application, we give a characterization to the $G$-Sobolev spaces introduced in Peng and Song (2015).

\end{abstract}

\textbf{Key words}: $G$-martingales with finite variation; generalized $G$-It\^o processes; unique decomposition; $G$-Sobolev spaces

\textbf{MSC-classification}: 60G44, 60G45, 60G48

\section{Introduction}

The notion of $G$-expectation is a type of nonlinear expectation proposed by Peng \cite{Peng-G, Peng-book}. It can be regarded as a
nonlinear generalization of Wiener probability space $(\Omega ,\mathcal{F}, P)$ where $\Omega =C_0([0,\infty) ,\mathbb{R}^{d})$ equipped with the uniform norm, $\mathcal{F}=\mathcal{B}%
(\Omega )$ and $P$ is a Wiener probability measure defined on $(\Omega ,%
\mathcal{F})$. Recall that the Wiener measure is defined such that the
canonical process $B_{t}(\omega ):=\omega _{t}$, $t\geq 0$ is a continuous
process with stationary and independent increments, namely $(B_{t})_{t\geq 0}$
is a Brownian motion. $G$-expectation $\mathbb{E}$ is a sublinear
expectation on the same canonical space $\Omega $, such that the same
canonical process $B$ is a $G$-Brownian motion, i.e., it is a continuous
process with stationary and independent increments. A crucial difference is that the quadratic variance process $\langle B\rangle$ of the $G$-Brownian motion $B$ is no longer a deterministic function of the time variable $t$. It is a process with stationary and independent increments. For the one-dimensional case, its increments are bounded by $\overline{\sigma}^2:=\mathbb{E}[B_1^2]\geq-\mathbb{E}[-B_1^2]=:\underline{\sigma}^2$,
\begin {eqnarray}\underline{\sigma}^2(t-s)\leq \langle B\rangle_t-\langle B\rangle_s\leq\overline{\sigma}^2(t-s), \ \textmd{for} \ s<t.
\end {eqnarray}

Similar to the classical Brownian motion, the $G$-Brownian motion corresponds to a (fully nonlinear) PDE: For a function $\varphi\in C_{b,Lip}(\mathbb{R})$, the collection of bounded Lipstchiz functions on $\mathbb{R}$, the function $u(t,x):=\mathbb{E}[\varphi(x+B_t)]$ is the (viscosity) solution to the following $G$-heat equation
\begin {eqnarray*}
\partial_t u-G(\partial^2_x u)&=&0, \ (t,x)\in (0,\infty)\times \mathbb{R},\\
                        u(0,x)&=& \varphi (x),
\end {eqnarray*} where $G(a)=\frac{1}{2}(\overline{\sigma}^2 a^+-\underline{\sigma}^2 a^-)$, $a\in \mathbb{R}$.  Moreover, for fixed $T>0$, the process $u(T-t,B_t)$, $t\in[0,T]$ is a martingale under $G$-expectation. By It\^o's formula, one has
\begin {eqnarray*}u(T-t,B_t)=& &\mathbb{E}[\varphi(B_T)]+\int_0^t\partial_xu(T-s,B_s)dB_s\\
&+&\frac{1}{2}\int_0^t\partial^2_xu(T-s,B_s)d\langle B\rangle_s-\int_0^tG(\partial^2_xu)(T-s,B_s)ds.
\end {eqnarray*}
The process $M_t:=\int_0^t\partial_xu(T-s,B_s)dB_s$ is a symmetric $G$-martingale (i.e., $M$ and $-M$ are both $G$-martingales), which shares the same properties with classical martingales in the probability space.
The process $K_t:=\frac{1}{2}\int_0^t\partial^2_xu(T-s,B_s)d\langle B\rangle_s-\int_0^tG(\partial^2_xu)(T-s,B_s)ds$ is a non-increasing $G$-martingale. For the linear case ($\underline{\sigma}=\overline{\sigma}$), this term disappears. However, when $\underline{\sigma}<\overline{\sigma}$, $G$-martingales with finite variation are a class of nontrivial processes, which show the variance  uncertainty of $G$-expectation.

For $Z\in H^2_G(0,T)$, $\eta\in M^2_G(0,T)$, \cite {P07b} showed  that a process of form
\begin {eqnarray}\label {GMR}
X_t=X_0+\int_0^tZ_sdB_s+\int_0^t\eta_sd\langle B\rangle_s-\int_0^t2G(\eta_s)ds
\end {eqnarray} is a $G$-martingale, and conjectured that for
any $\xi \in L_{G}^{2}(\Omega_{T})$, the martingale $\mathbb{E}_t[\xi]$ has the representation (\ref{GMR}). \cite{P07b} proved this conjecture for  cylinder random variables of form $\xi=\varphi(B_{t_1}, \cdot\cdot\cdot, B_{t_n})$. For the general case, Soner et al (2011) and Song (2011) proved independently the following $G$-martingale decomposition theorem:
\[\mathbb{E}_t[\xi]=\mathbb{E}[\xi]+\int_0^tZ_sdB_s+\int_0^t\eta_sd\langle B\rangle_s+K_t,\] where $K_t$ is a non-increasing $G$-martingale.

In this paper, our interest concentrates on $G$-martingales with finite variation. In the $G$-expectation space, there are three types of processes whose variation is finite.
\begin{description}
\item[(1)] $L_t=\int_0^t\eta_sds, \  \eta\in M_{G}^{p
}(0,T)$;

\item[(2)] $A_t=\int_0^t\zeta_sd\langle B\rangle_s, \ \zeta\in M_{G}^{p
}(0,T)$;

\item[(3)] $G$-martingales with finite variation.
\end{description}
It is a very important problem to distinguish these three types of processes. Song (2012) distinguished  (1) and  (2) completely:
\begin {equation} \label {c1}\int_0^t\eta_sds=\int_0^t\zeta_sd\langle B\rangle_s,\ t\in[0,T]\Longrightarrow\eta\equiv\zeta=0.
\end {equation}
As an immediate corollary of this result, Song (2012) proved the uniqueness of the representation for $G$-martingales with finite variation. Also, Conclusion (\ref{c1}) implies that the decomposition of $G$-It\^o process is unique, which is crucial for Peng and Song (2015) to define the $G$-Sobolev space $W^{1,2;p}_G(0,T)$.

The main job of this paper is to distinguish $G$-martingales with finite variation from the other two types of processes. For a $G$-martingale of the form $K_t(\varsigma)=\int_0^t\varsigma_sd\langle B\rangle_s-\int_0^t2G(\varsigma_s)ds$, if $K_t(\varsigma)=\int_0^t\eta_sds$  (resp. $\int_0^t\zeta_sd\langle B\rangle_s$),  $t\in [0,T]$, then by Conclusion (\ref{c1}), we get $\varsigma\equiv\eta=0$ (resp. $\varsigma\equiv\zeta=0$). So a $G$-martingale $K_t(\varsigma)$ could not be form of (1) or (2). Here we shall prove this conclusion for general $G$-martingales:
\begin {center} A $G$-martingale with finite variation could not be form of $\int_0^t\eta_sds$ or $\int_0^t\zeta_sd\langle B\rangle_s$.
\end {center}
More precisely, let $K$ be a non-increasing $G$-martingale. If \[K_t=\int_0^t\eta_sds \ (\textmd{resp.} \ \int_0^t\zeta_sd\langle B\rangle_s), \ t\in[0,T],\] we conclude that $K\equiv 0$.

Based on this conclusion, we can prove that the decomposition for generalized $G$-It\^o processes is unique: For $\zeta\in H^1_G(0,T)$, $\eta\in M^1_G(0,T)$ and non-increasing $G$-martingales $K, L$, if \[\int_0^t\zeta_s dB_s+\int_0^t\eta_sds+K_t=L_t,\ t\in[0,T],\] then we have $\eta\equiv0$, $\zeta\equiv0$ and $K_t=L_t$. This turns out to be a very strong result.  Many important conclusions in the context of $G$-expectation theory, including Conclusion (\ref{c1}), can be considered as its immediate corollaries (see Remark \ref {app} for details).The main results of this paper are Theorem \ref {Mthm} and Theorem \ref {gG-Ito}.

Peng and Song (2015) introduced the notion of $G$-Sobolev spaces. In the $G$-Sobolev space $W^{\frac{1}{2},1;p}_{\mathcal{A}_G}(0,T)$  the authors defined solutions to the following path dependent PDEs:
\begin{align}
\begin {split}\label {fPPDE-special-intro}
\mathcal{D}_{t}u+ G(\mathcal{D}^{2}_{x} u)+f(t,u,\mathcal{D}_{x} u)  &  =0, \  \ t\in[0,T),\\
u_T  &  =\xi. %
\end {split}
\end{align} This $W^{\frac{1}{2},1;p}_{\mathcal{A}_G}$-solution corresponds to the solution of the backward SDEs driven by $G$-Brownian motion considered in Hu et al (2014).

 In this paper, as an application of the main results, we shall give a characterization of the $G$-Sobolev space $W_{\mathcal{A}_{G}}^{\frac{1}%
{2},1;p}(0,T)$. The main idea is, just like the liner case, to integrate $\mathcal{A}_Gu=\mathcal{D}_{t}u+ G(\mathcal{D}^{2}_{x} u)$ as one operator, which reduces the regularity requirement for the solutions. To well define the derivative $\mathcal{A}_Gu$ for $u\in W_{\mathcal{A}_{G}}^{\frac{1}%
{2},1;p}(0,T)$, the uniqueness of the decomposition for generalized $G$-It\^o processes plays a crucial role.

The rest of the paper is organized as follows.  In Section 2, we present some basic notions and definitions on the  $G$-expectation theory.  We shall prove the main results in Section 3.  As an application of the uniqueness of the decomposition for generalized $G$-It\^o processes, we shall refine the definition of the $G$-Sobolev space $W^{\frac{1}{2},1;p}_{\mathcal{A}_G}$ in Section 4. In Section 5, as an appendix, we present the wellposedness result of $G$-BSDEs obtained in \cite {HJPS}.

\section{Some definitions and notations about $G$-expectation}
We review some basic notions and definitions on the  $G$-expectation theory. The readers may refer to \cite{Peng-G}, \cite{P07b}, \cite{P08a}, \cite{Peng-book} for more details.

Let $\Omega_T=C_{0}([0,T];\mathbb{R}^{d})$ be the space of all $%
\mathbb{R}^{d}$-valued continuous paths $\omega=(\omega(t))_{t\in[0,T]}$ with $\omega(0)=0$ and let $B_{t}(\omega)=\omega(t)$ be the
canonical process.

Let us recall the definitions of $G$-Brownian motion and its corresponding $%
G $-expectation introduced in \cite{P07b}. For simplicity, here we only consider the one-dimensional case.

Set
\begin{equation*}
L_{ip}(\Omega_{T}):=\{
\varphi(\omega(t_{1}),\cdots,\omega(t_{n})):t_{1},\cdots,t_{n}\in
\lbrack0,T],\  \varphi \in C_{b,Lip}(\mathbb{R}^{n}),\ n\in \mathbb{N}%
\},
\end{equation*}
where $C_{b,Lip}(\mathbb{R}^{n})$ is the collection of bounded Lipschitz
functions on $\mathbb{R}^{n}$.

We are given a function $G:\mathbb{R}\mapsto \mathbb{R} $, for $0\leq \underline{\sigma}^{2}\leq \overline{\sigma}%
^{2}$, by \[G(a):=\frac{1}{2}(\overline{\sigma}^{2}a^{+}-\underline{%
\sigma}^{2}a^{-}).\]

\bigskip \ For each $\xi\in L_{ip}(\Omega_{T})$ of the form
\begin{equation*}
\xi(\omega)=\varphi(\omega(t_{1}),\omega(t_{2}),\cdots,\omega(t_{n})),\  \
0=t_{0}<t_{1}<\cdots<t_{n}=T,
\end{equation*}
we define the following conditional  $G$-expectation
\begin{equation*}
\mathbb{E}_{t}[\xi]:=u_{k}(t,\omega(t);\omega(t_{1}),\cdots,\omega
(t_{k-1}))
\end{equation*}
for each $t\in \lbrack t_{k-1},t_{k})$, $k=1,\cdots,n$. Here, for each $%
k=1,\cdots,n$, $u_{k}=u_{k}(t,x;x_{1},\cdots,x_{k-1})$ is a function of $%
(t,x)$ parameterized by $(x_{1},\cdots,x_{k-1})\in \mathbb{R}^{k-1}$, which
is the solution of the following PDE ($G$-heat equation) defined on $%
[t_{k-1},t_{k})\times \mathbb{R}$:
\begin{equation*}
\partial_{t}u_{k}+G(\partial^2_{x}u_{k})=0\
\end{equation*}
with terminal conditions
\begin{equation*}
u_{k}(t_{k},x;x_{1},\cdots,x_{k-1})=u_{k+1}(t_{k},x;x_{1},\cdots x_{k-1},x),
\, \, \hbox{for $k<n$}
\end{equation*}
and $u_{n}(t_{n},x;x_{1},\cdots,x_{n-1})=\varphi (x_{1},\cdots x_{n-1},x)$.

The $G$-expectation of $\xi$ is defined by $\mathbb{E}[\xi]=%
\mathbb{E}_{0}[\xi]$. From this construction we obtain a natural norm $%
\left \Vert \xi \right \Vert _{L_{G}^{p}}:=\mathbb{E}[|\xi|^{p}]^{1/p}$, $p\geq 1$.
The completion of $L_{ip}(\Omega_{T})$ under $\left \Vert \cdot \right \Vert
_{L_{G}^{p}}$ is a Banach space, denoted by $L_{G}^{p}(\Omega_{T})$. The
canonical process $B_{t}(\omega):=\omega(t)$, $t\geq0$, is called a $G$%
-Brownian motion in this sublinear expectation space $(\Omega_T,L_{G}^{1}(%
\Omega_T ),\mathbb{E})$.

\begin {remark} For $\varepsilon\in [0,\frac{\overline{\sigma}^2-\underline{\sigma}^2}{2}]$, set $G_\varepsilon (a)=G(a)-\frac{\varepsilon}{2}|a|$. Sometimes, we denote by $\mathbb{E}_{G_\varepsilon}$ the $G$-expectation corresponds to the function $G_\varepsilon.$
\end {remark}

\begin {definition} A process $\{M_t\}$ with values in
$L^1_G(\Omega_T)$ is called a $G$-martingale if $\mathbb{E}_s(M_t)=M_s$
for any $s\leq t$. If $\{M_t\}$ and  $\{-M_t\}$ are both
$G$-martingales, we call $\{M_t\}$ a symmetric $G$-martingale.
\end {definition}

\begin{theorem}
\label{repr-Gexp} (\cite{DHP11}) There exists a tight subset $\mathcal{P}%
\subset\mathcal{M}_{1}(\Omega_{T})$, the set of probability measures on
$(\Omega_{T},\mathcal{B}(\Omega_{T}))$, such that
\[
\mathbb{E}[\xi]=\sup_{Q\in\mathcal{P}}E_{Q}[\xi] \  \text{for
all}\ \xi\in L_{ip}(\Omega_T).
\]
$\mathcal{P}$ is called a set that represents $\mathbb{E}$.
\end{theorem}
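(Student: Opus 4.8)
The plan is to follow the original argument of \cite{DHP11}: exhibit an explicit candidate family $\mathcal{P}$, check that it is tight, and then verify that the upper expectation associated with $\mathcal{P}$ coincides with $\mathbb{E}$ on $L_{ip}(\Omega_T)$. The whole point of choosing $\mathcal{P}$ explicitly, rather than producing it abstractly by a Hahn--Banach representation of the sublinear functional $\mathbb{E}$, is that tightness then comes essentially for free from standard martingale moment estimates.

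First I would fix an auxiliary Wiener space $(\widetilde\Omega,\widetilde{\mathcal F},(\widetilde{\mathcal F}_t)_{t\in[0,T]},\widetilde P)$ carrying a one-dimensional Brownian motion $W$, and let $\mathcal{A}_{0,T}$ be the set of all $(\widetilde{\mathcal F}_t)$-progressively measurable processes $\theta$ with values in $[\underline\sigma,\overline\sigma]$. For $\theta\in\mathcal{A}_{0,T}$ let $P_\theta$ be the law on $(\Omega_T,\mathcal B(\Omega_T))$ of the process $\big(\int_0^t\theta_s\,dW_s\big)_{t\in[0,T]}$, and put $\mathcal{P}:=\{P_\theta:\theta\in\mathcal{A}_{0,T}\}$, replacing it if convenient by its weak closure (this does not change $\sup_{Q}E_Q[\xi]$ for bounded continuous $\xi$). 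For tightness: under each $P_\theta$ the canonical process is a continuous martingale whose quadratic variation increments are bounded above by $\overline\sigma^{2}(t-s)$, so the Burkholder--Davis--Gundy inequality yields $E_{P_\theta}[|B_t-B_s|^{2m}]\le C_m\,\overline\sigma^{2m}|t-s|^{m}$ with $C_m$ independent of $\theta$; Kolmogorov's tightness criterion then makes $\mathcal{P}$ a tight subset of $\mathcal{M}_1(\Omega_T)$.

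The core step is the identity $\mathbb{E}[\xi]=\sup_{Q\in\mathcal{P}}E_Q[\xi]$ for $\xi\in L_{ip}(\Omega_T)$. Writing $\xi=\varphi(\omega(t_1),\dots,\omega(t_n))$ with $0=t_0<t_1<\cdots<t_n=T$ (pad the mesh if necessary), I would proceed by backward induction over $k=n,\dots,1$: set $w_n=\varphi$, and on $[t_{k-1},t_k)$ let $v_k(t,x;x_1,\dots,x_{k-1}):=\sup_{\theta\in\mathcal{A}_{t,t_k}}E\big[w_k\big(x_1,\dots,x_{k-1},x+\int_t^{t_k}\theta_s\,dW_s\big)\big]$, with $w_k$ the value inherited from step $k+1$. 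By the dynamic programming principle for this control problem, $v_k$ is the (viscosity) solution of the Hamilton--Jacobi--Bellman equation $\partial_t v_k+\sup_{\sigma\in[\underline\sigma,\overline\sigma]}\tfrac12\sigma^2\partial^2_x v_k=0$ on $[t_{k-1},t_k)$ with terminal data $w_k$; since $\sup_{\sigma\in[\underline\sigma,\overline\sigma]}\tfrac12\sigma^2 a=\tfrac12(\overline\sigma^2a^+-\underline\sigma^2a^-)=G(a)$, this is exactly the $G$-heat equation defining $u_k$ in Section 2, and uniqueness/comparison for the $G$-heat equation forces $v_k=u_k$ (including the dependence on the parameters $x_1,\dots,x_{k-1}$). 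Evaluating at $k=1$, $t=0$, $x=0$ gives $\mathbb{E}[\xi]=v_1(0,0)=\sup_{Q\in\mathcal P}E_Q[\xi]$. (If one also wants the identity on $L^1_G(\Omega_T)$, it extends by density, both functionals being $1$-Lipschitz for $\|\cdot\|_{L^1_G}$.)

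The main obstacle is precisely this PDE/control identification: establishing the dynamic programming principle, identifying the value function of the $[\underline\sigma,\overline\sigma]$-valued control problem with the viscosity solution of the $G$-heat equation, and having enough regularity and a clean comparison principle to stitch the solutions across the mesh $t_1<\cdots<t_n$ and to control the parametric dependence. Once that is in hand, tightness, positivity and normalization of the elements of $\mathcal{P}$, and (if needed) the density extension are all routine.
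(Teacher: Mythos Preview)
Your proposal is essentially correct and in fact reproduces the argument of \cite{DHP11}; note, however, that the present paper does not prove this theorem at all---it is stated as a citation, with no proof given---so there is no ``paper's own proof'' to compare against. The only related content here is the Remark immediately following the statement, which records precisely the explicit family $\mathcal{P}_G=\{P_h:h\in\mathcal{L}_{\mathbb{F}}^G\}$ you construct (laws of $\int_0^\cdot h_s\,dW_s$ with $h$ taking values in $[\underline\sigma,\overline\sigma]$), again attributed to \cite{DHP11}. So your sketch goes well beyond what the paper itself supplies, and matches the source reference.
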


\begin{remark} Let $W_t$ be a one-dimensional standard Brownian motion in the probability space $(\Omega, \mathcal{F}, P)$ and let $\mathbb{F}:=(\mathcal{F}_t)_{t\geq0}$ be the augmented filtration generated by $(W_t)_{t\geq0}$. Denote by $\mathcal{L}_{\mathbb{F}}^G$ the set of $\mathbb{F}$-adapted measurable processes with values in  $[\underline{\sigma},\overline{\sigma}]$. \cite{DHP11} showed that \[\mathcal{P}_G:=\{P_h|P_h:=P\circ(\int_0^\cdot h_sdW_s)^{-1}, h\in \mathcal{L}_{\mathbb{F}}^G\}\] is a set that represents $\mathbb{E}$.
\end{remark}

\begin{definition}
A function $\eta (t,\omega ):[0,T]\times \Omega _{T}\rightarrow \mathbb{R}$
is called a step process if there exists a time partition $%
\{t_{i}\}_{i=0}^{n}$ with $0=t_{0}<t_{1}<\cdot \cdot \cdot <t_{n}=T$, such
that for each $k=0,1,\cdot \cdot \cdot ,n-1$ and $t\in (t_{k},t_{k+1}]$
\begin{equation*}
\eta (t,\omega )=\xi_{t_k}\in L_{ip}(\Omega_{t_k}).
\end{equation*}%
 We denote by $M^{0}(0,T)$ the
collection of all step processes.
\end{definition}

For each $p\geq1$, we denote by $M_{G}^{p}(0,T)$ the completion of the space $M^0(0,T)$ under the norm%
\[
{{\left \Vert \eta \right \Vert _{M_{G}^{p}}:=\left \{  \mathbb{E}%
[{{\int_{0}^{T}|\eta_{t}|^{p}dt]}}\right \}  ^{1/p}}},\  \
\]
and by $H_{G}^{p}(0,T)$ the completion of the space $M^0(0,T)$ under
the norm%
\[
{{\left \Vert \eta \right \Vert _{H_{G}^{p}}:=}}\left[  {{\mathbb{E}%
[\left \{  {{\int_{0}^{T}|\eta_{t}|^{2}dt]}}\right \}  ^{p/2}}}\right]
^{1/p}.\
\]

\begin{theorem}
(\cite{STZ11}, \cite{Song11}) For $\xi\in
L^\beta_G(\Omega_T)$ with some $\beta>1$, $X_t=\mathbb{E}_t(\xi)$, $
t\in[0, T]$ has the following decomposition:
\begin {eqnarray*}
X_t=\mathbb{E}[\xi]+\int_0^tZ_sdB_s+K_t, \ q.s.,
\end {eqnarray*}
 where $\{Z_t\}\in H^1_G(0, T)$  and $\{K_t\}$ is a continuous
 non-increasing $G$-martingale. Furthermore, the above decomposition is unique and
$\{Z_t\}\in H^\alpha_G(0, T)$, $K_T\in L^\alpha_G(\Omega_T)$ for any
$1\leq\alpha<\beta$.
\end{theorem}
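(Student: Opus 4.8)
The plan is to establish the decomposition first for a dense subclass of $L^\beta_G(\Omega_T)$, then extend it by an a priori $G$-evaluation estimate, and finally settle uniqueness and the improved integrability.

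First I would treat smooth cylinder functions $\xi=\varphi(B_{t_1},\dots,B_{t_n})$. On each $[t_{k-1},t_k)$ we have $\mathbb{E}_t[\xi]=u_k(t,B_t;B_{t_1},\dots,B_{t_{k-1}})$ with $u_k$ solving the $G$-heat equation $\partial_t u_k+G(\partial^2_x u_k)=0$, so $G$-It\^o's formula and pasting over the partition give
\[
\mathbb{E}_t[\xi]=\mathbb{E}[\xi]+\int_0^t Z_s\,dB_s+\int_0^t\Big(\tfrac12\varsigma_s\,d\langle B\rangle_s-G(\varsigma_s)\,ds\Big),
\]
where $Z_s=\partial_x u_k(s,B_s;\cdots)$ and $\varsigma_s=\partial^2_x u_k(s,B_s;\cdots)$ on $[t_{k-1},t_k)$. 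Writing $M_t=\int_0^t Z_s\,dB_s$ and $K_t=\int_0^t(\tfrac12\varsigma_s\,d\langle B\rangle_s-G(\varsigma_s)\,ds)$, one checks that $M$ is a symmetric $G$-martingale (the integrands being bounded, resp.\ integrable, by PDE estimates) and that $K$ is continuous with $K_0=0$; moreover $K$ is non-increasing because $\tfrac12 a\sigma^2-G(a)\le0$ for every $a\in\mathbb{R}$ and every $\sigma^2\in[\underline\sigma^2,\overline\sigma^2]$ --- this is exactly the definition of $G$ --- while $d\langle B\rangle_s/ds$ takes values in $[\underline\sigma^2,\overline\sigma^2]$. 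Since $K=\mathbb{E}_\cdot[\xi]-\mathbb{E}[\xi]-M$ is the difference of $G$-martingales, $K$ is itself a $G$-martingale.

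Next I would prove the a priori estimate
\[
\|Z\|^\alpha_{H^\alpha_G}+\mathbb{E}\big[|K_T|^\alpha\big]\le C_{\alpha,\beta}\,\|\xi\|^\alpha_{L^\beta_G},\qquad 1\le\alpha<\beta,
\]
for $\xi$ in the cylinder class. The $L^1$ bound on the non-increasing part is immediate: from $-K_T=\mathbb{E}[\xi]+M_T-\xi$, sublinearity and $\mathbb{E}[M_T]=\mathbb{E}[-M_T]=0$ give $\mathbb{E}[|K_T|]=\mathbb{E}[-K_T]\le\mathbb{E}[\xi]+\mathbb{E}[-\xi]\le2\|\xi\|_{L^1_G}$. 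Once an $L^\alpha$ bound on $K_T$ is available, the bound on $Z$ follows by writing $M=\mathbb{E}_\cdot[\xi]-\mathbb{E}[\xi]-K$, applying Doob's maximal inequality to the $G$-martingale $\mathbb{E}_\cdot[\xi]$ and using $\sup_t|K_t|=|K_T|$, then the Burkholder--Davis--Gundy inequality for the symmetric $G$-martingale $M$ to pass from $\mathbb{E}[\sup_t|M_t|^\alpha]$ to $\mathbb{E}[(\int_0^T|Z_s|^2\,ds)^{\alpha/2}]$. The genuinely hard point --- the technical core of the theorem --- is upgrading the $L^1$ estimate on $K_T$ to the $L^\alpha$ estimate for every $\alpha<\beta$. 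Here I would bring in the perturbed generators $G_\varepsilon=G-\tfrac{\varepsilon}{2}|\cdot|$ from the Remark above, together with the comparison $\mathbb{E}_{G_\varepsilon}\le\mathbb{E}_G$ and the representing set $\mathcal{P}$ of Theorem \ref{repr-Gexp}, to control $\mathbb{E}\big[(\int_0^T|\varsigma_s|\,ds)^\alpha\big]$ through the gap $\mathbb{E}_G[\cdot]-\mathbb{E}_{G_\varepsilon}[\cdot]$, exploiting the pointwise bound $0\le-dK_s\le\tfrac12(\overline\sigma^2-\underline\sigma^2)|\varsigma_s|\,ds$ and iterating to reach all exponents below $\beta$; this maximal/stability estimate (in the spirit of \cite{Song11}, \cite{STZ11}) is where I expect the main obstacle.

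With the estimate in hand the extension is routine: for general $\xi\in L^\beta_G(\Omega_T)$ pick smooth cylinder $\xi^m\to\xi$ in $L^\beta_G$; applied to $\xi^m-\xi^l$, the estimate shows $(Z^m)$ is Cauchy in $H^\alpha_G(0,T)$ and $(K^m_T)$ is Cauchy in $L^\alpha_G(\Omega_T)$ for every $1\le\alpha<\beta$, and passing to the limit in the decomposition produces $Z\in H^\alpha_G(0,T)$ together with a continuous non-increasing $G$-martingale $K$ with $K_T\in L^\alpha_G(\Omega_T)$ (monotonicity, continuity and the $G$-martingale property survive the $L^\alpha$-limit). Finally, for uniqueness, if $\int_0^tZ_s\,dB_s+K_t=\int_0^tZ'_s\,dB_s+K'_t$ on $[0,T]$ then $N_t:=\int_0^t(Z_s-Z'_s)\,dB_s=K'_t-K_t$ is simultaneously a symmetric $G$-martingale and a continuous process of finite variation; under each $Q=P_h\in\mathcal{P}_G$ it is a continuous local martingale of finite variation, hence $Q$-a.s.\ constant $=N_0=0$, so $N\equiv0$ q.s., and computing its quadratic variation gives $\int_0^\cdot(Z_s-Z'_s)^2\,d\langle B\rangle_s\equiv0$ q.s., which forces $Z=Z'$ and therefore $K=K'$.
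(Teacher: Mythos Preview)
The paper does not prove this theorem: it is quoted in Section~2 as a background result with the citation \cite{STZ11}, \cite{Song11}, and no proof is given anywhere in the text. So there is no ``paper's own proof'' against which to compare your proposal.

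That said, your outline is broadly faithful to the strategy in \cite{Song11}: decomposition for cylinder $\xi$ via the $G$-heat equation and It\^o's formula, an a priori estimate to pass to the limit, and uniqueness by the classical ``continuous local martingale of finite variation is zero'' argument under each $Q\in\mathcal{P}_G$. You have correctly located the only non-routine step --- the $L^\alpha$ control of $K_T$ for all $\alpha<\beta$ --- and your idea of exploiting the gap between $\mathbb{E}_G$ and $\mathbb{E}_{G_\varepsilon}$ is exactly the mechanism used in \cite{Song11}. One caution: your sketch of that step is vague (``iterating to reach all exponents below $\beta$'') and the actual argument in \cite{Song11} does not proceed by bounding $\int_0^T|\varsigma_s|\,ds$ directly, since for general $\xi$ the process $\varsigma$ need not exist; rather one works with the $G$-evaluation $\mathcal{E}^G$ and proves a stability inequality of the form $\mathbb{E}[|K_T|^\alpha]\le C\,\|\xi\|_{L^\beta_G}^\alpha$ via a nonlinear Doob-type maximal inequality applied to $\mathbb{E}_t[\xi]$ combined with a careful use of the $G_\varepsilon$-perturbation. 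If you intend to write this out in full, that is the place where the real work lies.
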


\section{Main results}
In the sequel, we shall only consider the one-dimensional $G$-expectation space which is non-degenerate and really nonlinear, i.e., $\overline{\sigma}>\underline{\sigma}>0$.

 Let $W$ be a standard Brownian motion in the probability space $(\Omega, \mathcal{F},  P)$ and assume that $\mathbb{F}=(\mathcal{F}_t)$ is the augmented filtration generated by $W$.

  An $\mathbb{F}$-adapted measurable process $h$ is called an \textit{($m$-steps) self-dependent process} if it has the following form:
\begin {eqnarray}\label {form}h_t=\sum_{i=0}^{m-1}\xi_i1_{]\frac{i}{m}, \frac{i+1}{m}]}(t)
\end {eqnarray}where
$\xi_i=\varphi_i(\int_{\frac{i-1}{m}}^{\frac{i}{m}}h_sdW_s,\cdot\cdot\cdot,\int_0^{\frac{1}{m}}h_sdW_s)$,
$\varphi_i\in C_{b,Lip}(R^i)$.  Clearly,  an $m$-steps self-dependent process $h$ can also be considered as a $2^nm$-steps self-dependent process for any $n\geq 0$.

\begin {lemma} (Lemma 4.2 in \cite {Song12}) \label {density} The
collection of self-dependent processes bounded by two positive constants $c, C$ ($c\leq|h_s|\leq C$) is dense in the collection of $\mathbb{F}$-adapted measurable processes bounded by the same  constants $c, C$ under the norm $$\|h\|_2=[E(\int_0^1|h_s|^2ds)]^{1/2}.$$
\end {lemma}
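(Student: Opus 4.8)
The plan is a three-step density argument: discretize the target process in time, smooth its coefficients, and finally re-express the resulting process in the self-referential form (\ref{form}). Throughout, write $I=[c,C]$ for the interval of admissible values (so that the given $h$ is $I$-valued) and let $\pi_I$ denote the nearest-point projection onto $I$, a $1$-Lipschitz map; since $h=\pi_I(h)$, for any process $\tilde h$ we have $\|\pi_I(\tilde h)-h\|_2\le\|\tilde h-h\|_2$. Hence at every stage we may compose an approximant with $\pi_I$ to restore the bound $c\le|\cdot|\le C$ at no cost in the approximation error -- this is the mechanism that keeps the two-sided bound alive through the whole construction.

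First I would discretize in time. By the standard density of adapted step processes in $L^2([0,1]\times\Omega)$, then approximating each coefficient -- which is $\mathcal F_{i/m}$-measurable -- by a Borel function of the finitely many increments $\Delta_j W:=W_{j/m}-W_{(j-1)/m}$ (refining the time grid if necessary), and composing with $\pi_I$, one reduces to a $\frac{1}{m}$-step process $\tilde h_t=\sum_i g_i(\Delta_i W,\dots,\Delta_1 W)\,1_{]i/m,(i+1)/m]}(t)$ with each $g_i$ Borel and $I$-valued and $g_0$ a constant. Mollifying the $g_i$ and composing once more with $\pi_I$ -- here the convexity of the interval $I$ is used, so that mollification keeps the range inside $I$ -- and using that $(\Delta_1 W,\dots,\Delta_i W)$ has a density bounded on compacts, we may further assume $g_i\in C_{b,Lip}(\mathbb R^i)$, still $I$-valued; in particular $c\le|\tilde h_t|\le C$.

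It remains to put $\tilde h$ into the form (\ref{form}), and this is the heart of the argument, where the lower bound $c>0$ is essential. On $](j-1)/m,j/m]$ the process $\tilde h$ equals the constant $\xi_{j-1}=g_{j-1}(\Delta_{j-1}W,\dots,\Delta_1 W)$, which is bounded away from $0$; hence, writing $J_j:=\int_{(j-1)/m}^{j/m}\tilde h_s\,dW_s$, we get $J_j=\xi_{j-1}\Delta_j W$ and so $\Delta_j W=\xi_{j-1}^{-1}J_j$. Substituting these identities recursively rewrites each $\xi_i$ as $\varphi_i(J_i,\dots,J_1)$ for suitable $\varphi_i$; since every $\xi_{j-1}$ takes values in $[c,C]$, the maps $y\mapsto\xi_{j-1}(y)^{-1}$ are Lipschitz, so each $\varphi_i$ is a composition of $C_{b,Lip}$ maps with such maps, hence bounded and locally Lipschitz. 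Truncating $\varphi_i$ outside a large ball, extended so as to remain $[c,C]$-valued, makes it a genuine element of $C_{b,Lip}(\mathbb R^i)$; since the $J_j$ are, up to the bounded factors $\xi_{j-1}$, Gaussian and hence concentrated, and since there are only $m$ steps, a telescoping estimate over the self-referential stages (all quantities bounded, all maps Lipschitz) shows that the self-dependent process generated by the truncated $\varphi_i$ converges to $\tilde h$ in $\|\cdot\|_2$ as the truncation radius grows. A short induction on $i$ ($\widehat\xi_0=\varphi_0=\xi_0$, $\int_0^{1/m}\widehat h\,dW=\xi_0 W_{1/m}=J_1$, whence $\widehat\xi_1=\varphi_1(J_1)=\xi_1$, and so on) checks that, before truncation, these $\varphi_i$ regenerate $\tilde h$ exactly; chaining the three approximations then yields the lemma.

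I expect the main obstacle to be precisely this last step: exploiting $c>0$ so that $W$ and $\int\tilde h\,dW$ carry the same information (making the change of variables invertible with Lipschitz inverse), and then keeping the $\varphi_i$ inside $C_{b,Lip}$ via truncation while controlling the propagation of the induced error through the finitely many self-referential stages. The convexity of the value interval, by contrast, enters only in the smoothing and projection of Steps 1--2.
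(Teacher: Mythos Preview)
The paper does not prove this lemma; it is quoted verbatim as Lemma~4.2 of \cite{Song12} with no argument given, so there is nothing in the present paper to compare your proof against.

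Assessed on its own, your outline is essentially correct. The core idea---inverting $\Delta_j W=\xi_{j-1}^{-1}J_j$ thanks to $|\xi_{j-1}|\ge c>0$, and thereby rewriting each coefficient as a function of the self-generated increments $J_1,\dots,J_i$---is exactly the right place for the lower bound to enter, and your inductive check that the (untruncated) $\varphi_i$ regenerate $\tilde h$ is the key observation. The truncation to force $\varphi_i\in C_{b,Lip}$ is also fine: on the event $\{\max_j|J_j|<R\}$ your induction shows the self-dependent process built from the truncated $\varphi_i^R$ coincides with $\tilde h$, while off that event both are bounded by $C$, so the $L^2$ error is controlled by $P(\max_j|J_j|\ge R)^{1/2}\to0$.

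One minor slip: you set $I=[c,C]$ and use that the nearest-point projection $\pi_I$ is $1$-Lipschitz, but the lemma as stated says $c\le|h_s|\le C$, which in principle allows values in $[-C,-c]\cup[c,C]$---a disconnected set onto which the nearest-point projection is \emph{not} $1$-Lipschitz, so your ``project for free'' device breaks. In the paper's actual use (see the Remark after Theorem~\ref{repr-Gexp} and the proof of Theorem~\ref{Mthm}) only processes valued in $[\underline\sigma,\overline\sigma]$ appear, so assuming $h\ge0$ is harmless; just make that reduction explicit.
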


 Let $B_t(\omega)=\omega_t$ be the canonical process on the space $\Omega_T$. For an  $\mathbb{F}$-adapted measurable process $h$, set $P_h=P\circ(\int_0^\cdot h_sdW_s)^{-1}$, a probability on $(\Omega_T, \mathcal{B}(\Omega_T))$. For a process $\{X_t\}$, we denote  by
$X^m_{[0,T]}$ the vector
$(X_T-X_{\frac{(m-1)T}{m}},\cdot\cdot\cdot, X_{\frac{T}{m}}-X_0)$.

 \begin {lemma} \label {perturbation} Let $h$ be an $m$-steps self-dependent process of form (\ref {form}).  We call  a bounded  $\mathbb{F}$-adapted measurable process $\tilde{h}$ an m-perturbation of $h$ if  the following property holds: \[\int_{\frac{i}{m}}^{\frac{i+1}{m}}|\tilde{h}_s|^2ds=\frac{1}{m}|\tilde{\xi}_i|^2:=\frac{1}{m}|\varphi_i(\int_{\frac{i-1}{m}}^{\frac{i}{m}}\tilde{h}_sdW_s, \cdot\cdot\cdot,  \int_{0}^{\frac{1}{m}}\tilde{h}_sdW_s)|^2, \textit{P-}a.s.\]
 Then for any random variable of the form $X=\psi(B^m_{[0,1]})$ with $\psi$ a bounded Lipschiz continuous function, we have \[E_{P_h}[X]=E_{P_{\tilde{h}}}[X].\]
 \end {lemma}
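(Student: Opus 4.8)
The plan is to prove the equality $E_{P_h}[X]=E_{P_{\tilde h}}[X]$ by reducing it, via the Markov-type structure of $m$-steps self-dependent processes, to a statement about finite-dimensional distributions of the iterated stochastic integrals. First I would observe that, by definition of $P_h$, the random variable $X=\psi(B^m_{[0,1]})$ pulled back to $(\Omega,\mathcal F,P)$ becomes $\psi\bigl(\int_{\frac{m-1}{m}}^1 h_sdW_s,\ldots,\int_0^{\frac1m}h_sdW_s\bigr)$, so the claim is purely about the law of the vector $I(h):=\bigl(\int_0^{\frac1m}h_sdW_s,\int_{\frac1m}^{\frac2m}h_sdW_s,\ldots,\int_{\frac{m-1}{m}}^1 h_sdW_s\bigr)$ under $P$, and the corresponding vector $I(\tilde h)$. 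The key structural point is that on the interval $(\frac{i}{m},\frac{i+1}{m}]$ the integrand $h$ equals the constant (in $s$) random variable $\xi_i=\varphi_i(\int_{\frac{i-1}{m}}^{\frac im}h_sdW_s,\ldots)$, which is $\mathcal F_{i/m}$-measurable. Hence, conditionally on $\mathcal F_{i/m}$, the increment $\int_{\frac im}^{\frac{i+1}{m}}h_sdW_s$ is a centered Gaussian with (conditional) variance $\int_{\frac im}^{\frac{i+1}{m}}|h_s|^2ds=\frac1m|\xi_i|^2$, independent of $\mathcal F_{i/m}$ given $|\xi_i|$. The hypothesis on $\tilde h$ is precisely designed so that the same description holds for $\tilde h$ with the same conditional variance $\frac1m|\tilde\xi_i|^2$, where $\tilde\xi_i=\varphi_i(I_0(\tilde h),\ldots,I_{i-1}(\tilde h))$.

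The second step is an induction on $i=0,1,\ldots,m-1$ showing that the law of $(I_0(h),\ldots,I_{i}(h))$ under $P$ equals the law of $(I_0(\tilde h),\ldots,I_{i}(\tilde h))$ under $P$. The base case $i=0$: both $I_0(h)=\int_0^{1/m}h_sdW_s$ and $I_0(\tilde h)=\int_0^{1/m}\tilde h_sdW_s$ are centered Gaussians; for $i=0$ there is no earlier increment, so $h$ on $(0,\frac1m]$ equals a deterministic constant $\xi_0=\varphi_0()$ (a constant function with no arguments), hence $I_0(h)\sim N(0,\frac1m|\xi_0|^2)$, and likewise $\int_0^{1/m}|\tilde h_s|^2ds=\frac1m|\tilde\xi_0|^2$ with $\tilde\xi_0=\varphi_0()=\xi_0$ forced to be the same constant, so the two laws coincide. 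For the inductive step, assume the laws of the first $i$ coordinates agree. Condition on $\mathcal F_{i/m}$: under $P$, $I_i(h)=\int_{i/m}^{(i+1)/m}h_sdW_s$ is, given $\mathcal F_{i/m}$, $N(0,\frac1m|\xi_i|^2)$ with $\xi_i=\varphi_i(I_0(h),\ldots,I_{i-1}(h))$; under $P$ with $\tilde h$, $I_i(\tilde h)$ given $\mathcal F_{i/m}$ is $N(0,\frac1m|\tilde\xi_i|^2)$ with $\tilde\xi_i=\varphi_i(I_0(\tilde h),\ldots,I_{i-1}(\tilde h))$ — here I would use the perturbation hypothesis to get the conditional variance. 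Since a centered Gaussian is determined by its variance, and since $\varphi_i$ is a fixed function applied to the first $i$ coordinates whose joint law agrees by the inductive hypothesis, the joint law of $(I_0,\ldots,I_i)$ agrees: one writes, for bounded continuous test functions, $E[g(I_0,\ldots,I_i)]=E\bigl[\int g(I_0,\ldots,I_{i-1},x)\,p_{\frac1m|\varphi_i(I_0,\ldots,I_{i-1})|^2}(x)\,dx\bigr]$ where $p_v$ is the $N(0,v)$ density, and the right-hand side depends only on the law of $(I_0,\ldots,I_{i-1})$. Taking $i=m-1$ gives that $I(h)$ and $I(\tilde h)$ have the same law under $P$, hence $E_{P_h}[X]=E[\psi(I(h))]=E[\psi(I(\tilde h))]=E_{P_{\tilde h}}[X]$.

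The main obstacle I anticipate is making the conditional-Gaussian argument fully rigorous when the conditional variance $\frac1m|\xi_i|^2$ can vanish (the problem does not assume $h$ is bounded below), so that $p_v$ degenerates to a Dirac mass at $0$; this is handled by phrasing everything through the characteristic function $E[e^{\mathrm i\lambda I_i}\mid\mathcal F_{i/m}]=\exp(-\frac{\lambda^2}{2}\cdot\frac1m|\xi_i|^2)$, which is continuous in the variance including at $0$, thereby avoiding densities altogether. A secondary technical point is the measurability and well-posedness of $\tilde\xi_i$ as a genuine $\mathcal F_{i/m}$-measurable random variable and the fact that the hypothesis on $\tilde h$ forces exactly this nested structure — but this is immediate from the statement, since the hypothesis identifies $\int_{i/m}^{(i+1)/m}|\tilde h_s|^2ds$ with $\frac1m|\varphi_i(\int_{(i-1)/m}^{i/m}\tilde h_sdW_s,\ldots)|^2$, and $\int_0^{j/m}\tilde h_sdW_s$ is $\mathcal F_{j/m}$-measurable for each $j\le i$.
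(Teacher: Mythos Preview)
Your proposal is correct and follows essentially the same approach as the paper: both arguments rest on the observation that, conditionally on $\mathcal{F}_{i/m}$, the increment $\int_{i/m}^{(i+1)/m}\tilde h_s\,dW_s$ is centered Gaussian with variance $\frac{1}{m}|\tilde\xi_i|^2$ (since its quadratic variation over that interval is $\mathcal{F}_{i/m}$-measurable), and then unwind the recursion determined by the $\varphi_i$'s. The only cosmetic difference is orientation---the paper integrates out the last increment first (reducing $\psi$ to $\psi_1,\ldots,\psi_{m-1}$ backward), whereas you induct forward on the joint law of the first $i$ increments---but the substance is identical.
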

 \begin {proof} Set $\psi_1(x_{m-1}, \cdot\cdot\cdot,x_1):=E[\psi(\varphi_{m-1}(x_{m-1}, \cdot\cdot\cdot,x_1)(W_1-W_{\frac{m-1}{m}}), x_{m-1}, \cdot\cdot\cdot,x_1)]$. On the one hand, we have
 \begin {eqnarray*}
 E_{P_h}[X]
 &=&E[\psi(\int_{\frac{m-1}{m}}^{1}h_sdW_s, \cdot\cdot\cdot,  \int_{0}^{\frac{1}{m}}h_sdW_s)]\\
 &=&E[E[\psi(\int_{\frac{m-1}{m}}^{1}h_sdW_s, \cdot\cdot\cdot,  \int_{0}^{\frac{1}{m}}h_sdW_s)|\mathcal{F}_{\frac{m-1}{m}}]]\\
 &=&E[\psi_1(\int_{\frac{m-2}{m}}^{\frac{m-1}{m}}h_sdW_s, \cdot\cdot\cdot,  \int_{0}^{\frac{1}{m}}h_sdW_s)].
 \end {eqnarray*}
 On the other hand, letting $P_{\frac{m-1}{m}}^\omega$ be the regular conditional probability of $P(\cdot|\mathcal{F}_{\frac{m-1}{m}})$, $\int_{\frac{m-1}{m}}^{1}\tilde{h}_sdW_s$ is normally distributed under  $P_{\frac{m-1}{m}}^\omega$ with mean 0
 and variance $\frac{1}{m}|\tilde{\xi}_{m-1}|^2(\omega)$ since $\int_{\frac{i}{m}}^{\frac{i+1}{m}}|\tilde{h}_s|^2ds=\frac{1}{m}|\tilde{\xi}_i|^2$. So
 \begin {eqnarray*}
 E_{P_{\tilde{h}}}[X]
 &=&E[\psi(\int_{\frac{m-1}{m}}^{1}\tilde{h}_sdW_s, \cdot\cdot\cdot,  \int_{0}^{\frac{1}{m}}\tilde{h}_sdW_s)]\\
 &=&E[E[\psi(\int_{\frac{m-1}{m}}^{1}\tilde{h}_sdW_s, \cdot\cdot\cdot,  \int_{0}^{\frac{1}{m}}\tilde{h}_sdW_s)|\mathcal{F}_{\frac{m-1}{m}}]]\\
 &=&E[\psi_1(\int_{\frac{m-2}{m}}^{\frac{m-1}{m}}\tilde{h}_sdW_s, \cdot\cdot\cdot,  \int_{0}^{\frac{1}{m}}\tilde{h}_sdW_s)].
 \end {eqnarray*}
 Repeating the above arguments for $m-1$ times, finally we can find a bounded Lipschiz continuous function $\psi_{m-1}$ such that
 \[ E_{P_h}[X]=E_P[\psi_{m-1}(\int_{0}^{\frac{1}{m}}h_sdW_s)], \ E_{P_{\tilde{h}}}[X]=E_P[\psi_{m-1}(\int_{0}^{\frac{1}{m}}\tilde{h}_sdW_s)].\]
 Since $\int_0^t|\tilde{h}_s|^2ds=\frac{1}{m}\tilde{\xi}_0^2=\frac{1}{m}\xi_0^2$, $\int_{0}^{\frac{1}{m}}h_sdW_s$ and $\int_{0}^{\frac{1}{m}}\tilde{h}_sdW_s$ are both normally distributed with mean 0
 and variance $\frac{1}{m}|\xi_{0}|^2$. Hence, we have $ E_{P_h}[X]= E_{P_{\tilde{h}}}[X]$.

 \end {proof}

 \begin {theorem} \label {weak convergence-perturbation} Let $h$ be an $m$-steps self-dependent process. For $n\geq 1$, let $h^n$ be a $2^nm$-perturbation of $h$. Assuming that $(h^n)_{n\geq1}$ are uniformly bounded,  we have \[P_{h^n}\stackrel{w}{\rightarrow }P_h.\]
 \end {theorem}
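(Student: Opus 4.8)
The plan is to first show that the family $(P_{h^n})_{n\ge 1}$ is tight, and then to identify every one of its weak limit points with $P_h$ by matching finite-dimensional distributions along the dense set of dyadic times $\mathcal D:=\bigcup_{n\ge 1}\{j/(2^{n}m):0\le j\le 2^{n}m\}\subset[0,1]$.

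For the tightness I would exploit the uniform bound. If $|h^n_s|\le C$ for all $n$ and $s$, then under $P_{h^n}$ the canonical process $B$ has the law of the continuous $P$-martingale $\int_0^\cdot h^n_s\,dW_s$, whose quadratic variation equals $\int_0^t|h^n_s|^2\,ds\le C^2t$. The Burkholder--Davis--Gundy inequality then yields
\[
E_{P_{h^n}}\big[|B_t-B_s|^4\big]\le c\,C^4\,(t-s)^2,\qquad 0\le s\le t\le 1,
\]
uniformly in $n$, and of course $B_0=0$; so Kolmogorov's tightness criterion on $C_0([0,1];\mathbb R)$ gives that $(P_{h^n})_{n\ge 1}$ is tight, hence relatively compact for weak convergence.

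Next I would pin down the finite-dimensional distributions. Fix $0\le s_1<\cdots<s_k\le 1$ with every $s_i\in\mathcal D$ and a bounded Lipschitz function $g$ on $\mathbb R^k$, and pick $N$ so large that each $s_i$ is of the form $j/(2^{N}m)$. For every $n\ge N$ each $B_{s_i}$ is a partial sum of the coordinates of the increment vector $B^{2^{n}m}_{[0,1]}$, so $g(B_{s_1},\dots,B_{s_k})=\psi(B^{2^{n}m}_{[0,1]})$ for some bounded Lipschitz $\psi$. Viewing $h$ as a $2^{n}m$-steps self-dependent process, $h^n$ is by assumption a $2^{n}m$-perturbation of $h$, so Lemma \ref{perturbation} applies and gives $E_{P_{h^n}}[g(B_{s_1},\dots,B_{s_k})]=E_{P_{h}}[g(B_{s_1},\dots,B_{s_k})]$ for all $n\ge N$. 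Now let $Q$ be any weak limit point, say $P_{h^{n_j}}\stackrel{w}{\rightarrow}Q$; since $\omega\mapsto g(\omega(s_1),\dots,\omega(s_k))$ is bounded and continuous on $C_0([0,1];\mathbb R)$, passing to the limit along $n_j$ (which eventually satisfies $n_j\ge N$) gives $E_Q[g(B_{s_1},\dots,B_{s_k})]=E_{P_h}[g(B_{s_1},\dots,B_{s_k})]$. Hence $Q$ and $P_h$ have the same finite-dimensional distributions at all times of the dense set $\mathcal D$, and since both measures are carried by $C_0([0,1];\mathbb R)$ this extends to all finite-dimensional distributions by letting dyadic times converge to an arbitrary $t\in[0,1]$; therefore $Q=P_h$. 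As every subsequential weak limit of the tight family $(P_{h^n})$ equals $P_h$, we conclude $P_{h^n}\stackrel{w}{\rightarrow}P_h$.

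The step I expect to be the main obstacle is the passage from the partial information supplied by Lemma \ref{perturbation} to full weak convergence: that lemma only controls cylinder functionals built on a single partition $\{j/(2^{n}m)\}_j$, so the identity $E_{P_{h^n}}=E_{P_h}$ on such functionals holds only for $n$ large relative to the times involved, which by itself is far too weak to imply $P_{h^n}\stackrel{w}{\rightarrow}P_h$. The remedy is precisely to route the argument through tightness of $(P_{h^n})$ together with the density of $\bigcup_n\{j/(2^{n}m)\}_j$ in $[0,1]$, using path continuity to recover all finite-dimensional distributions. Minor care is also needed to keep the test functions bounded and Lipschitz when re-expressing $g(B_{s_1},\dots,B_{s_k})$ as a function of $B^{2^{n}m}_{[0,1]}$, and to make the final dyadic-to-arbitrary-time passage rigorous.
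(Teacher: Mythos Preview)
Your proposal is correct and follows essentially the same approach as the paper's proof: establish tightness of $(P_{h^n})$ from the uniform bound, invoke Lemma~\ref{perturbation} to get $E_{P_{h^n}}[\psi(B^{2^km}_{[0,1]})]=E_{P_h}[\psi(B^{2^km}_{[0,1]})]$ for all $n\ge k$, and combine the two to conclude weak convergence. The paper's version is considerably more terse (it asserts tightness without writing out the BDG/Kolmogorov argument and does not spell out the subsequential-limit step), but the ingredients and logic are the same.
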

\begin {proof} For any $k\geq 1$ and any function $\psi\in C_{b,Lip}(R^{2^km})$, by Lemma \ref {perturbation}, we have, for $n\geq k$,
\[E_{P_{h^n}}[\psi(B^{2^km}_{[0,1]})]=E_{P_h}[\psi(B^{2^km}_{[0,1]})].\] In other words, we have
\[\lim_{n\rightarrow\infty}E_{P_{h^n}}[\psi(B^{2^km}_{[0,1]})]=E_{P_h}[\psi(B^{2^km}_{[0,1]})]\] for any $k\geq 1$ and any function $\psi\in C_{b,Lip}(R^{2^km})$.

 Since $(h^n)_{n\geq1}$ are uniformly bounded, we know that $(P_{h^n})_{n\geq1}$ are tight. Combing the above arguments, we conclude that
 \[P_{h^n}\stackrel{w}{\rightarrow }P_h.\]
\end {proof}
\begin {lemma} \label {MP}
Let $K$ be a non-increasing $G$-martingale.   Fix an $\mathbb{F}$-adapted measurable process $h$ with $\underline{\sigma}\leq|h|\leq\overline{\sigma}$. Then for any $s<t$ and any $\varepsilon>0$ there exists an $\mathbb{F}$-adapted measurable process $\tilde{h}$ with $\underline{\sigma}\leq|\tilde{h}|\leq\overline{\sigma}$ and
$\tilde{h}_r1_{[0,s]}(r)=h_r1_{[0,s]}(r)$ such that $E_{P_{\tilde{h}}}[-(K_t-K_s)]<\varepsilon$.
\end {lemma}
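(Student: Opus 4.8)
The plan is to exploit the defining identity of a $G$-martingale, $\mathbb{E}_s[K_t]=K_s$: it says that, \emph{conditionally on the past encoded by $h$ on $[0,s]$}, the supremum of $E[\,K_t-K_s\mid\mathcal{F}_s\,]$ over admissible volatilities is $0$, so one should be able to choose a single admissible volatility $\tilde h$, unchanged on $[0,s]$, that nearly realizes it. Since $K$ is a $G$-martingale with values in $L^1_G(\Omega_T)$, we have $K_t-K_s=\mathbb{E}_t[K_T]-\mathbb{E}_s[K_T]$ with $K_T\in L^1_G(\Omega_T)$; as the conditional $G$-expectation is transparent only for cylinder functionals, the first move is to replace $K_T$ by one.

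Fix $\varepsilon>0$ and pick $\xi_N\in L_{ip}(\Omega_T)$ with $\|K_T-\xi_N\|_{L^1_G}$ small, enlarging its finite time grid so that it contains both $s$ and $t$. Set $\zeta_N:=\mathbb{E}_t[\xi_N]\in L_{ip}(\Omega_T)$ and $\Delta_N:=\zeta_N-\mathbb{E}_s[\xi_N]$. The contraction property $|\mathbb{E}_r[X]-\mathbb{E}_r[Y]|\le\mathbb{E}_r[|X-Y|]$ and the tower rule give
\[
\mathbb{E}\,[\,|(K_t-K_s)-\Delta_N|\,]\ \le\ \mathbb{E}[\mathbb{E}_t[|K_T-\xi_N|]]+\mathbb{E}[\mathbb{E}_s[|K_T-\xi_N|]]\ =\ 2\|K_T-\xi_N\|_{L^1_G},
\]
and the tower rule also yields $\mathbb{E}_s[\Delta_N]=\mathbb{E}_s[\mathbb{E}_t[\xi_N]]-\mathbb{E}_s[\xi_N]=0$. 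Next I invoke the conditional representation of the $G$-expectation for the cylinder functional $\zeta_N$ over the family $\mathcal{P}_G=\{P_{h'}:h'\in\mathcal{L}_{\mathbb{F}}^{G}\}$ recalled after Theorem \ref{repr-Gexp}: for $P_h$-a.e.\ $\omega$,
\[
\mathbb{E}_s[\zeta_N]=\mathrm{ess\,sup}\{\,E_{P_{h'}}[\zeta_N\mid\mathcal{F}_s]\ :\ h'\in\mathcal{L}_{\mathbb{F}}^{G},\ h'1_{[0,s]}=h1_{[0,s]}\,\}.
\]
Since $\mathbb{E}_s[\zeta_N]=\mathbb{E}_s[\xi_N]$ is a bounded $\mathcal{F}_s$-measurable random variable, for every $\delta>0$ there is $\tilde h\in\mathcal{L}_{\mathbb{F}}^{G}$ with $\tilde h1_{[0,s]}=h1_{[0,s]}$ such that
\[
E_{P_{\tilde h}}[-\Delta_N\mid\mathcal{F}_s]=\mathbb{E}_s[\xi_N]-E_{P_{\tilde h}}[\zeta_N\mid\mathcal{F}_s]\le\delta,\qquad P_h\text{-a.s.}
\]
Because $\tilde h1_{[0,s]}=h1_{[0,s]}$ forces $P_{\tilde h}|_{\mathcal{F}_s}=P_h|_{\mathcal{F}_s}$, taking $P_{\tilde h}$-expectations and using $E_{P_{\tilde h}}[\,\cdot\,]\le\mathbb{E}[\,\cdot\,]$ gives
\[
E_{P_{\tilde h}}[-(K_t-K_s)]\ \le\ E_{P_{\tilde h}}[-\Delta_N]+E_{P_{\tilde h}}[\,|(K_t-K_s)-\Delta_N|\,]\ \le\ \delta+2\|K_T-\xi_N\|_{L^1_G}.
\]
Choosing $N$ with $2\|K_T-\xi_N\|_{L^1_G}<\varepsilon/2$ and then $\delta<\varepsilon/2$ proves the lemma (extend $\tilde h$ by, say, $\overline\sigma$ on $(t,T]$).

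The main obstacle is the conditional representation together with the \emph{simultaneous} selection of one $\mathbb{F}$-adapted $\tilde h$, valued in $[\underline\sigma,\overline\sigma]$ and left unchanged on $[0,s]$, that attains the supremum up to $\delta$ for $P_h$-a.e.\ $\omega$ at once. This is precisely where the cylinder structure and the standing non-degeneracy $\underline\sigma>0$ (so that $P_h$ generates all of $\mathcal{F}_s$) matter: on each subinterval of the grid $\mathbb{E}_\cdot[\xi_N]$ solves a $G$-heat equation, whose stochastic-control value is approached by volatilities that are piecewise constant in time with values depending on the state at the left endpoint — i.e.\ by self-dependent-type step processes. The density statement of Lemma \ref{density} and the weak convergence of perturbations in Theorem \ref{weak convergence-perturbation} (via Lemma \ref{perturbation}) are exactly what is needed to build such a $\tilde h$ inside the admissible class while keeping the $\mathcal{F}_s$-marginal $P_h|_{\mathcal{F}_s}$ intact; the remaining steps are the routine approximation bookkeeping indicated above.
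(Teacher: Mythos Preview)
Your route is genuinely different from the paper's, and the difference matters. The paper does \emph{not} approximate $K_T$ by a cylinder random variable and then invoke a conditional dynamic--programming representation of $\mathbb{E}_s[\,\cdot\,]$. Instead it uses the structural approximation of non-increasing $G$-martingales from \cite{Song11b}: there exists $\zeta\in M^0(0,T)$ with
\[
\mathbb{E}\Bigl[\sup_{r}|K_r-K_r(\zeta)|\Bigr]<\tfrac{\varepsilon}{2},\qquad K_r(\zeta)=\int_0^r\zeta_u\,d\langle B\rangle_u-\int_0^r 2G(\zeta_u)\,du.
\]
For this explicit object the optimal volatility on $]s,t]$ is the bang--bang control $\tilde h=\mathrm{sign}_{\underline\sigma,\overline\sigma}(\tilde a_{t_i})$ determined by the sign of the (pulled-back) coefficient $\zeta$, which makes $E_{P_{\tilde h}}[K_t(\zeta)-K_s(\zeta)]=0$ exactly. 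No conditional representation or selection argument is needed; the construction is completely explicit.

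Your argument, by contrast, hinges on two assertions that you do not establish: (i) the conditional identity
\[
\mathbb{E}_s[\zeta_N]=\mathop{\mathrm{ess\,sup}}\bigl\{E_{P_{h'}}[\zeta_N\mid\mathcal{F}_s]:h'\in\mathcal{L}_{\mathbb{F}}^G,\ h'1_{[0,s]}=h1_{[0,s]}\bigr\},
\]
which is a dynamic programming principle not stated anywhere in this paper (only the unconditional representation of Theorem~\ref{repr-Gexp} is available), and (ii) the existence of a \emph{single} admissible $\tilde h$ realizing the essential supremum up to $\delta$. You yourself flag (ii) as ``the main obstacle'', but the resolution you sketch is not correct: Lemma~\ref{density} and Theorem~\ref{weak convergence-perturbation} concern density of self-dependent step processes and weak convergence of $2^nm$-perturbations; they say nothing about $\delta$-optimal selection in the conditional problem. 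What would actually close the gap is either a standard pasting argument (the family $\{E_{P_{h'}}[\zeta_N\mid\mathcal{F}_s]\}$ is upward directed, so Neveu's lemma plus monotone convergence yields a $\tilde h$ with $E_{P_h}[\mathbb{E}_s[\zeta_N]-E_{P_{\tilde h}}[\zeta_N\mid\mathcal{F}_s]]<\delta$, which is all you need after integrating), or, more in the spirit of the paper, the observation that for the cylinder functional $\xi_N$ the $G$-martingale $\mathbb{E}_\cdot[\xi_N]$ already has the form $\mathbb{E}[\xi_N]+\int_0^\cdot Z\,dB+K_\cdot(\zeta)$ with an explicit step process $\zeta=\tfrac12\partial_x^2 u_k$, at which point you are back to the paper's construction. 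Either way, the last paragraph as written does not supply a proof; it mis-identifies which tools are doing the work.
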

\begin {proof} Fix  $s<t$, $\varepsilon>0$ and an $\mathbb{F}$-adapted measurable process $h$ with $\underline{\sigma}\leq|h|\leq\overline{\sigma}$.  By Theorem 5.4 in \cite {Song11b}, for the non-increasing $G$-martingale $K_t$, there exist $\zeta\in M^0(0,T)$ such that
\[\mathbb{E}[\sup_{r\in[0,1]}|K_r-K_r(\zeta)|]<\frac{\varepsilon}{2},\] where $K_r(\zeta)=\int_0^r\zeta_ud\langle B\rangle_u-\int_0^r2G(\zeta_u)du$.
We assume that  $\zeta$ is of the following form:
\[\zeta_u=\sum_{i=0}^{m-1}a_{t_i}1_{]t_i,t_{i+1}]}(u),\] where $a_{t_i}=\phi_i(B_{t_i}-B_{t_{i-1}}, \cdot\cdot\cdot, B_{t_1})$ with $\phi_i\in C_{b,Lip}(R^i)$.
Without loss of generality, we assume $s=t_i$ and $t=t_{i+1}$. Set $\tilde{a}_{t_i}=\phi_i(\int_{t_{i-1}}^{t_i}h_udW_u, \cdot\cdot\cdot, \int_{0}^{t_1}h_udW_u)$ and
\begin {equation}\textmd{sign}_{\underline{\sigma},\overline{\sigma}} (\tilde{a}_{t_i})=
\begin {cases}\overline{\sigma} & \textmd{if $\tilde{a}_{t_i}\geq0$;}\\
\underline{\sigma} & \textmd{if $\tilde{a}_{t_i}<0$.}
\end {cases}
\end {equation}
Let $\tilde{h}_r=h_r$ for $s\in [0,t_i]$ and let  $\tilde{h}_r=\textmd{sign}_{\underline{\sigma},\overline{\sigma}} (\tilde{a}_{t_i})$ for $r\in ]t_i, t_{i+1}]$. Then

\begin {eqnarray*}E_{P_{\tilde{h}}}[K_t(\zeta)-K_s(\zeta)]&=&E_{P_{\tilde{h}}}[a_{t_i}(\langle B\rangle_{t_{i+1}}-\langle B\rangle_{t_i})-2G(a_{t_i})(t_{i+1}-t_i)]\\
&=& E[\tilde{a}_{t_i}\textmd{sign}_{\underline{\sigma},\overline{\sigma}} (\tilde{a}_{t_i})^2(t_{i+1}-t_i)-2G(\tilde{a}_{t_i})(t_{i+1}-t_i)]=0.
\end {eqnarray*}
So
\[E_{P_{\tilde{h}}}[-(K_t-K_s)]\leq 2 \mathbb{E}[\sup_{r\in[0,1]}|K_r-K_r(\zeta)|]<\varepsilon.\]

\end {proof}

\begin {lemma} \label {weak continuity} Let $\mathcal{P}\subset \mathcal{M}_1(\Omega_T)$ be a weakly compact set that represents $\mathbb{E}$:
 \[
\mathbb{E}[\xi]=\sup_{Q\in\mathcal{P}}E_{Q}[\xi] \  \text{for
all}\ \xi\in L_{ip}(\Omega_T).
\]
Then, for $\xi\in L^1_G(\Omega_T)$, $(E_Q[\xi])_{Q\in \mathcal{P}}$ is continuous with respect to the weak convergence topology on $\mathcal{M}_1(\Omega_T)$.
\end {lemma}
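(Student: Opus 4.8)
The plan is to treat first the case $\xi\in L_{ip}(\Omega_T)$, where weak continuity is essentially built into the definition of weak convergence, and then to extend to a general $\xi\in L^1_G(\Omega_T)$ by a uniform approximation argument. As a preliminary, I would record that for each $Q\in\mathcal{P}$ the functional $\eta\mapsto E_Q[\eta]$ extends from $L_{ip}(\Omega_T)$ to a bounded linear functional on $L^1_G(\Omega_T)$: indeed for $\eta\in L_{ip}(\Omega_T)$ one has $|E_Q[\eta]|\leq E_Q[|\eta|]\leq\sup_{Q'\in\mathcal{P}}E_{Q'}[|\eta|]=\mathbb{E}[|\eta|]=\|\eta\|_{L^1_G}$, and the equality $\mathbb{E}[|\eta|]=\sup_{Q'\in\mathcal{P}}E_{Q'}[|\eta|]$ persists on all of $L^1_G(\Omega_T)$ by density of $L_{ip}$. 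Hence $E_Q[\xi]$ is well defined for every $\xi\in L^1_G(\Omega_T)$, independent of the approximating sequence, and $\sup_{Q\in\mathcal{P}}|E_Q[\xi]-E_Q[\zeta]|\leq\|\xi-\zeta\|_{L^1_G}$ for all $\xi,\zeta\in L^1_G(\Omega_T)$.

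Next, for $\xi=\varphi(\omega(t_1),\cdots,\omega(t_n))\in L_{ip}(\Omega_T)$ with $\varphi\in C_{b,Lip}(\mathbb{R}^n)$, the map $\omega\mapsto\xi(\omega)$ is bounded and continuous on $\Omega_T$ for the uniform norm. Therefore, by the very definition of weak convergence of probability measures on $\Omega_T$, $Q_n\stackrel{w}{\rightarrow}Q$ implies $E_{Q_n}[\xi]\to E_Q[\xi]$; since the weak topology on $\mathcal{M}_1(\Omega_T)$ is metrizable ($\Omega_T$ being Polish), this gives that $Q\mapsto E_Q[\xi]$ is weakly continuous on $\mathcal{M}_1(\Omega_T)$, in particular on $\mathcal{P}$, for every $\xi\in L_{ip}(\Omega_T)$.

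Finally, given $\xi\in L^1_G(\Omega_T)$, I would pick $\xi_k\in L_{ip}(\Omega_T)$ with $\|\xi-\xi_k\|_{L^1_G}\to0$. By the first step the functions $Q\mapsto E_Q[\xi_k]$ converge to $Q\mapsto E_Q[\xi]$ uniformly on $\mathcal{P}$, and by the second step each $Q\mapsto E_Q[\xi_k]$ is continuous on $\mathcal{P}$ for the weak topology; a uniform limit of continuous functions is continuous, which yields the claim. I do not anticipate a serious obstacle; the only point requiring a little care is the passage of the representation identity $\mathbb{E}[\cdot]=\sup_{Q\in\mathcal{P}}E_Q[\cdot]$ and of the $L^1_G$-contractivity of $E_Q$ from $L_{ip}$ to all of $L^1_G(\Omega_T)$, together with the well-posedness of $E_Q[\xi]$ as a limit independent of the chosen sequence — all of which follow from density of $L_{ip}$ in $L^1_G$. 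Note that weak compactness of $\mathcal{P}$ is not actually used in this argument; only the representation property of $\mathcal{P}$ is needed.
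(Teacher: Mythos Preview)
Your proposal is correct and follows exactly the same route as the paper's proof: continuity for $\xi\in L_{ip}(\Omega_T)$ is immediate since such $\xi$ are bounded continuous, and the general case follows because $(E_Q[\xi])_{Q\in\mathcal{P}}$ is a uniform limit of the continuous functions $(E_Q[\xi^n])_{Q\in\mathcal{P}}$. You have simply made explicit the uniform bound $\sup_{Q\in\mathcal{P}}|E_Q[\xi]-E_Q[\zeta]|\leq\|\xi-\zeta\|_{L^1_G}$ and the well-definedness of $E_Q[\xi]$, which the paper leaves implicit; your observation that weak compactness is not actually used is also correct.
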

\begin {proof} For $\xi\in L_{ip}(\Omega_T)$, $(E_Q[\xi])_{Q\in \mathcal{P}}$ is obviously continuous. By the definition of the space $L^1_G(\Omega_T)$, for  $\xi\in L^1_G(\Omega_T)$, $(E_Q[\xi])_{Q\in \mathcal{P}}$ can be considered as the uniform limit of a sequence of continuous functions $(E_Q[\xi^n])_{Q\in \mathcal{P}}$ with $\xi^n \in L_{ip}(\Omega_T)$ and $\mathbb{E}[|\xi^n-\xi|]\rightarrow0$. So we get the desired result.
\end {proof}

\begin {theorem} \label {Mthm} Let $K_t=\int_0^t\eta_sds$ for some $\eta\in M^1_G(0,T)$. If $K$ is a non-increasing $G$-martingale, we have $K\equiv 0.$
\end {theorem}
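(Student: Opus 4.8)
The plan is to show $\mathbb{E}[-K_T]=0$. This is enough: since $K_0=0$ and $K$ is non-increasing we have $-K_t\geq0$ q.s.; moreover $-K_T=-\int_0^T\eta_s\,ds$ belongs to $L^1_G(\Omega_T)$ because $\eta\mapsto\int_0^T\eta_s\,ds$ is bounded from $M^1_G(0,T)$ into $L^1_G(\Omega_T)$; and a nonnegative element of $L^1_G$ with vanishing $G$-expectation is $0$ q.s. (all the $E_Q[-K_T]$ are $\geq0$ and their supremum is $0$). Hence $K_T=0$ q.s., and monotonicity gives $K_t=0$ q.s.\ for every $t$. Rescaling, take $T=1$. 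By Theorem \ref{repr-Gexp} and the Remark after it, extended from $L_{ip}$ to $L^1_G$ by uniform approximation, $\mathbb{E}[-K_1]=\sup_{Q\in\mathcal P}E_Q[-K_1]$ for a weakly compact representing set $\mathcal P$ (e.g.\ the weak closure of $\mathcal P_G$); since $Q\mapsto E_Q[-K_1]$ is weakly continuous on $\mathcal P$ by Lemma \ref{weak continuity} (note $-K_1\in L^1_G$), the supremum is attained at some $Q^\ast\in\mathcal P$. Writing $Q^\ast$ as a weak limit of $P_{h^{(k)}}$ with $h^{(k)}$ self-dependent (first approximate $Q^\ast$ by elements of $\mathcal P_G$, then use Lemma \ref{density} together with the fact that $\|g-h\|_2\to0$ forces $P_g$ and $P_h$ to have the same weak limit, via Burkholder--Davis--Gundy), and again invoking Lemma \ref{weak continuity}, everything reduces to proving: $E_{P_h}[-K_1]=0$ for every self-dependent $h$ with $\underline\sigma\leq|h|\leq\overline\sigma$.

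Fix such an $m$-steps self-dependent $h$. The strategy now is to produce uniformly bounded $2^nm$-perturbations $h^n$ of $h$ with $E_{P_{h^n}}[-K_1]\to0$. Granting this, Theorem \ref{weak convergence-perturbation} gives $P_{h^n}\stackrel{w}{\rightarrow}P_h$ and Lemma \ref{weak continuity} gives $E_{P_h}[-K_1]=\lim_nE_{P_{h^n}}[-K_1]=0$, which finishes the proof. (Because $G$ is a maximum of two linear functions, it also does no harm to assume that the coefficient functions of $h$ take values in $\{\underline\sigma,\overline\sigma\}$; this will be convenient in the construction.)

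To construct $h^n$ I would follow the pattern of the proof of Lemma \ref{MP}. By Theorem 5.4 of \cite{Song11b}, choose a step process $\zeta^n\in M^0(0,1)$, adapted to a partition refining $\{k/(2^nm)\}_k$, with $\mathbb{E}[\sup_{r\in[0,1]}|K_r-K_r(\zeta^n)|]<1/n$, where $K_r(\zeta^n)=\int_0^r\zeta^n_u\,d\langle B\rangle_u-\int_0^r2G(\zeta^n_u)\,du$; write $\zeta^n=\sum_k a^n_k\,1_{]k/N,(k+1)/N]}$ with $N=2^nm$ and $a^n_k$ a bounded Lipschitz function of the increments of $B$ on $[0,k/N]$. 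On each subinterval $]k/N,(k+1)/N]$ I would define $h^n$ through $\operatorname{sign}_{\underline\sigma,\overline\sigma}$ of the value of $a^n_k$ evaluated along $\int h^n\,dW$ — chosen so that $h^n$ is at the same time a legitimate $2^nm$-perturbation of $h$ — so that, by the identity $a\cdot\operatorname{sign}_{\underline\sigma,\overline\sigma}(a)^2-2G(a)=0$, the increments of $K(\zeta^n)$ over the subintervals have zero $P_{h^n}$-expectation. Summing, $E_{P_{h^n}}[K_1(\zeta^n)]=0$, whence $E_{P_{h^n}}[-K_1]\leq E_{P_{h^n}}[-K_1(\zeta^n)]+\mathbb{E}[\sup_r|K_r-K_r(\zeta^n)|]\leq1/n\to0$, as required.

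The main obstacle is exactly this construction of $h^n$. In Lemma \ref{MP} the perturbing process is essentially free — it is pinned only on an initial segment — so one may simply take it constant, equal to $\operatorname{sign}_{\underline\sigma,\overline\sigma}(a^n_k)$, on each subinterval; here, by contrast, $h^n$ is required to be a genuine $2^nm$-perturbation of the prescribed $h$, which fixes the $L^2$-mass of $h^n$ on each $]k/N,(k+1)/N]$ in terms of the self-dependent structure of $h$ rather than in terms of $\operatorname{sign}_{\underline\sigma,\overline\sigma}(a^n_k)$; and spreading the mass of $h^n$ within a subinterval destroys the cancellation $a\cdot\sigma^2-2G(a)=0$ unless $\sigma^2\in\{\underline\sigma^2,\overline\sigma^2\}$. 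Reconciling ``perturbation of $h$'' with ``zero $P_{h^n}$-expectation of the increments of $K(\zeta^n)$'' — which is what forces the reduction to $h$ with $\{\underline\sigma,\overline\sigma\}$-valued coefficients and the compatible choice of the partitions and approximants $\zeta^n$ — is the one genuinely delicate point; everything else is bookkeeping with Lemmas \ref{density}, \ref{perturbation}, \ref{MP}, \ref{weak continuity} and Theorem \ref{weak convergence-perturbation}.
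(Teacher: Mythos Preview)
Your proposal has a genuine gap, exactly at the point you yourself flag as ``the one genuinely delicate point''. The two constraints on $h^n$ are incompatible: on each subinterval $]k/N,(k+1)/N]$ the perturbation requirement fixes $\int_{k/N}^{(k+1)/N}|h^n_s|^2\,ds=N^{-1}|\xi^n_i|^2$, where $\xi^n_i$ is dictated by the self-dependent structure of $h$, whereas the cancellation $a\,\sigma^2-2G(a)=0$ forces $|h^n|\equiv\operatorname{sign}_{\underline\sigma,\overline\sigma}(\tilde a^n_k)$, where $\tilde a^n_k$ comes from the approximant $\zeta^n$ of $K$. These are unrelated pieces of data, and restricting $h$ to $\{\underline\sigma,\overline\sigma\}$-valued coefficients does not make them match. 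More decisively, your construction never uses the hypothesis $K_t=\int_0^t\eta_s\,ds$ with $\eta\in M^1_G$ in any essential way: the approximation $K\approx K(\zeta^n)$ from Theorem~5.4 of \cite{Song11b} is valid for \emph{every} non-increasing $G$-martingale, so if your argument worked it would show $E_{P_h}[-K_1]=0$ for all such $K$ and all self-dependent $h$---contradicting, say, $K_t=\tfrac12\langle B\rangle_t-G(1)t$.

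The paper's proof repairs this with two devices absent from your outline. First, it passes via Lemma~\ref{weak continuity} to the $G_\varepsilon$-subexpectation so as to obtain a self-dependent $h$ with $|h|^2\in[\underline\sigma^2+\varepsilon,\overline\sigma^2-\varepsilon]$ and $E_{P_h}[-K_1]=\delta>0$; the strict inequalities provide wiggle room (so your move towards $\{\underline\sigma,\overline\sigma\}$-valued $h$ is actually the wrong direction). Second, with $\alpha:=\varepsilon/(\overline\sigma^2-\underline\sigma^2)$, each subinterval of length $(2^nm)^{-1}$ is split into an initial $\alpha$-fraction, on which Lemma~\ref{MP} is applied \emph{freely} to make the $P_{h^n}$-expectation of the increment of $-K$ small, and a remaining $(1-\alpha)$-fraction on which $h^n$ is set equal to the constant restoring the perturbation constraint; the wiggle room guarantees this constant lies in $[\underline\sigma,\overline\sigma]$. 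The contradiction is then extracted from the absolute-continuity hypothesis itself: because $K_t=\int_0^t\eta_s\,ds$ with $\eta\in M^1_G$, the integral of $\eta$ against the rapidly oscillating indicator of the $(1-\alpha)$-parts converges in $L^1_G$ to $(1-\alpha)K_1$, and likewise for the $\alpha$-parts. It is this averaging step (Steps~2--4 in the paper) where the hypothesis does its real work, and it is the ingredient your sketch lacks.
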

\begin {proof} Without loss of generality, we consider the case $T=1$. Assume $\mathbb{E}[-K_1]>0$. Then, by Lemma \ref {weak continuity}, there exists $\varepsilon>0$ such that $\mathbb{E}_{G_\varepsilon}[-K_1]>0$. So, by Theorem \ref {repr-Gexp} and Lemma \ref {density}, we can find  a process $h$ of form (\ref {form}) with $\underline{\sigma}^2+\varepsilon\leq|h_s|^2\leq\overline{\sigma}^2-\varepsilon$ such that $\delta:=E_{P_h}[-K_1]>0$. Set $\alpha=\frac{\varepsilon}{\overline{\sigma}^2-\underline{\sigma}^2}$. For $k\geq1$, set \[\delta_{k, \alpha}(s)=\sum_{i=0}^{k-1}(1_{]\frac{i}{k},\frac{i+\alpha}{k}]}(s)-1_{]\frac{i+\alpha}{k},\frac{i+1}{k}]}(s)).\]
\textbf{Step 1.} For any $n\geq 1$, we can find a $2^nm$-perturbation $h^n$ of $h$  with $\underline{\sigma}\leq|h^n_s|\leq\overline{\sigma}$ such that \[E_{P_{h^n}}[\int_0^1\delta^+_{2^nm,\alpha}(s)\eta_sds]>-\frac{\delta}{2}\alpha.\]
First, let us define $h^n_s$ for $s\in[0, \frac{1}{m}]$.

Set $\xi^n_0=\xi_0$.

By Lemma \ref {MP},  there exists an $\mathbb{F}$-adapted measurable process $h^{n,1,1}$ with $\underline{\sigma}\leq|h^{n,1,1}|\leq\overline{\sigma}$ such that $E_{P_{h^{n,1,1}}}[-(K_{\frac{\alpha}{2^nm}}-K_0)]<\frac{\delta}{2^{n+1}m}\alpha$.

Since $\underline{\sigma}^2+\varepsilon\leq|\xi^n_0|^2\leq\overline{\sigma}^2-\varepsilon$, we have $||h^{n,1,1}_s|^2-|\xi^n_0|^2|\leq \overline{\sigma}^2-\underline{\sigma}^2-\varepsilon$, by which we get $\frac{\alpha}{1-\alpha}||h^{n,1,1}_s|^2-|\xi^n_0|^2|\leq \varepsilon,$ and consequently
\[\frac{2^nm}{1-\alpha}\int_0^{\frac{\alpha}{2^nm}}||h^{n,1,1}_s|^2-|\xi^n_0|^2|ds\leq\varepsilon.\] So, noting $\underline{\sigma}^2+\varepsilon\leq|\xi^n_0|^2\leq\overline{\sigma}^2-\varepsilon$ again, we get
\[|\xi^n_0|^2+\frac{2^nm}{1-\alpha}\int_0^{\frac{\alpha}{2^nm}}|\xi^n_0|^2-|h^{n,1,1}_s|^2ds=
\frac{1}{1-\alpha}(|\xi^n_0|^2-2^nm\int_0^{\frac{\alpha}{2^nm}}|h^{n,1,1}_s|^2ds)\in [\underline{\sigma}^2, \overline{\sigma}^2].\]

Set
\begin {equation}h^n_s=
\begin {cases}h^{n,1,1}_s & \textmd{for $s\in ]0,\frac{\alpha}{2^{n}m}]$;}\\
\sqrt{\frac{1}{1-\alpha}(|\xi^n_0|^2-2^nm\int_0^{\frac{\alpha}{2^nm}}|h^{n,1,1}_s|^2ds)} & \textmd{for $s\in ]\frac{\alpha}{2^{n}m}, \frac{1}{2^nm}]$.}
\end {cases}
\end {equation} It is easy to check that $\int_0^{\frac{1}{2^nm}}|h^n_s|^2 ds=\frac{1}{2^nm} |\xi^n_0|^2$.

Assume that we have defined $h^n_s$ for all $s\in[0,
  \frac{j}{2^nm}]$, $0\leq j\leq 2^n-1$.  Then let us define $h^n_s$ for $s\in]\frac{j}{2^nm}, \frac{j+1}{2^nm}]$.

 By Lemma \ref {MP},  there exists an $\mathbb{F}$-adapted process $h^{n,1,j+1}$ with $\underline{\sigma}\leq|h^{n,1,j+1}|\leq\overline{\sigma}$ and
$h^{n,1,j+1}_r1_{[0,\frac{j}{2^nm}]}(r)=h^n_r1_{[0,\frac{j}{2^nm}]}(r)$ such that $E_{P_{h^{n,1,j+1}}}[-(K_{\frac{j+\alpha}{2^nm}}-K_{\frac{j}{2^nm}})]<\frac{\delta}{2^{n+1}m}\alpha$.

Set
\begin {equation}h^n_s=
\begin {cases}h^{n,1,j+1}_s & \textmd{for $s\in ]\frac{j}{2^{n}m},\frac{j+\alpha}{2^{n}m}]$;}\\
\sqrt{\frac{1}{1-\alpha}(|\xi^n_0|^2-2^nm\int_0^{\frac{\alpha}{2^nm}}|h^{n,1,j+1}_s|^2ds)} & \textmd{for $s\in ]\frac{j+\alpha}{2^{n}m}, \frac{j+1}{2^nm}]$.}
\end {cases}
\end {equation} It is easily seen  that $\int_{\frac{j}{2^{n}m}}^{\frac{j+1}{2^nm}}|h^n_s|^2 ds=\frac{1}{2^nm} |\xi^n_0|^2$ and $|h^n_s|^2\in [\underline{\sigma}^2, \overline{\sigma}^2]$.

Assume that we have defined $h^n_s$ for all $s\in[0,
  \frac{i}{m}]$, $0\leq i \leq m-1$.

Set $\xi^n_i=\varphi_i(\int_{\frac{i-1}{m}}^{\frac{i}{m}}h^n_sdW_s, \cdot\cdot\cdot, \int_{0}^{\frac{1}{m}}h^n_sdW_s)$.

Then we can define the process $h^n_s$ for $s\in ]\frac{i}{m}, \frac{i+1}{m}]$ by repeating the above arguments with $\xi^n_0$ replaced by $\xi^n_i$.

Clealy, the process $h^n_s$ defined in this way is a $2^nm$-perturbation  of $h$. Besides, we have
\[E_{P_{h^n}}[\int_0^1\delta^+_{2^nm,\alpha}(s)\eta_s]=\sum_{j=0}^{2^nm-1}E_{P_{h^n}}[K_{\frac{j+\alpha}{2^nm}}-K_{\frac{j}{2^nm}}]>-\frac{\delta}{2}\alpha.\]

\textbf{Step 2.} $\lim_{n\rightarrow\infty}\mathbb{E}[|\int_0^1\delta^-_{2^nm,\alpha}(s)\eta_sds-(1-\alpha)K_1|]=0$.

For $\zeta\in M^0(0,1)$, the conclusion is obvious.  As a functional of $\zeta\in M^1_G(0,1)$,  \[D_\alpha(\zeta):=\limsup_{n\rightarrow\infty}\mathbb{E}[|\int_0^1\delta^-_{2^nm,\alpha}(s)\zeta_sds-(1-\alpha)\int_0^1\zeta_sds|]\] is continuous: $|D_\alpha(\zeta)-D_\alpha(\varsigma)|\leq \|\zeta-\varsigma\|_{M^1_G}$, for $\zeta, \varsigma\in M^1_G(0,1)$, which implies the desired result.

\textbf{Step 3.} $\lim_{n\rightarrow\infty}E_{P_{h^n}}[\int_0^1\delta^-_{2^nm,\alpha}(s)\eta_sds]=(1-\alpha)E_{P_h}[K_1]=-(1-\alpha)\delta$.

Actually,
\begin {eqnarray*}& & |E_{P_{h^n}}[\int_0^1\delta^-_{2^nm,\alpha}(s)\eta_sds]-(1-\alpha)E_{P_h}[K_1]|\\
&\leq&  |E_{P_{h^n}}[\int_0^1\delta^-_{2^nm,\alpha}(s)\eta_sds]-(1-\alpha)E_{P_{h^n}}[K_1]|+(1-\alpha)|E_{P_{h^n}}[K_1]-E_{P_h}[K_1]|\\
&\leq&  \mathbb{E}[|\int_0^1\delta^-_{2^nm,\alpha}(s)\eta_sds]-(1-\alpha)K_1|]+(1-\alpha)|E_{P_{h^n}}[K_1]-E_{P_h}[K_1]|.
\end {eqnarray*}
By Step 2 and Theorem \ref {weak convergence-perturbation}, we get the desired result.

\textbf{Step 4.} $\lim_{n\rightarrow\infty}\mathbb{E}[\int_0^1(\delta^+_{2^nm,\alpha}(s)-\frac{\alpha}{1-\alpha}\delta^-_{2^nm,\alpha}(s))\eta_sds]=0$.

The proof follows immediately from Step 2. Actually, setting \[ d_\alpha(\zeta):=\limsup_{n\rightarrow\infty}\mathbb{E}[|\int_0^1(\delta^+_{2^nm,\alpha}(s)-\frac{\alpha}{1-\alpha}\delta^-_{2^nm,\alpha}(s))\zeta_sds|], \ \zeta\in M^1_G(0,1),\] it is easily seen that $(1-\alpha)d_\alpha(\zeta)=D_\alpha(\zeta)$.

Combing the above arguments, we get
\begin {eqnarray*}
0&=&\lim_{n\rightarrow\infty}\mathbb{E}[\int_0^1(\delta_{2^nm,\alpha}(s)^+-\frac{\alpha}{1-\alpha}\delta_{2^nm,\alpha}(s)^-)\eta_sds]\\
&\geq&
\limsup_{n\rightarrow\infty}E_{P_{h^n}}[\int_0^1(\delta^+_{2^nm,\alpha}(s)-\frac{\alpha}{1-\alpha}\delta^-_{2^nm,\alpha}(s))\eta_sds]\\
&\geq& \limsup_{n\rightarrow\infty}E_{P_{h^n}}[\int_0^1\delta^+_{2^nm,\alpha}(s)\eta_s]-\frac{\alpha}{1-\alpha}\lim_{n\rightarrow\infty}E_{P_{h^n}}[\int_0^1\delta^-_{2^nm,\alpha}(s)\eta_sds]\\
&\geq&-\frac{\delta}{2}\alpha+\frac{\alpha}{1-\alpha}\times(1-\alpha)\delta=\frac{\delta}{2}\alpha>0,
\end {eqnarray*} which  is a contradiction. The last inequality follows from Step 1 and Step 3.
\end {proof}
\begin {corollary} Let $K_t=\int_0^t\eta_sd\langle B\rangle_s$ for some $\eta\in M^1_G(0,T)$. If $K$ is a non-increasing $G$-martingale, we have $K\equiv 0.$
\end {corollary}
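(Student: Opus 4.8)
The plan is to reduce the statement to Theorem~\ref{Mthm}, by peeling off from $K$ an absolutely continuous (in $t$) process and showing it is again a non-increasing $G$-martingale.

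First I would extract from the hypothesis the pointwise sign information $\eta_s\le 0$, $ds\times q.s.$-a.e. Since $K$ is non-increasing, $\int_s^t\eta_u\,d\langle B\rangle_u=K_t-K_s\le 0$ quasi-surely for all $0\le s\le t$. Under each representing measure $P_h$ (with $h\in\mathcal{L}^G_{\mathbb{F}}$, as in the Remark following Theorem~\ref{repr-Gexp}) the process $\langle B\rangle$ is absolutely continuous with density $h^2\ge\underline{\sigma}^2>0$, so $\int_s^t\eta_uh_u^2\,du\le 0$ $P_h$-a.s.\ for all $s\le t$; letting $s,t$ run over the rationals this forces $\eta_u\le 0$ for $du\,P_h$-a.e.\ $(u,\omega)$, and since $h$ is arbitrary, $\eta\le 0$ quasi-surely. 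Because $2G(a)=\underline{\sigma}^2a$ for $a\le 0$, we then have $2G(\eta_s)=\underline{\sigma}^2\eta_s$, so the non-increasing $G$-martingale $\widetilde K_t:=\int_0^t\eta_s\,d\langle B\rangle_s-\int_0^t2G(\eta_s)\,ds$ (a non-increasing $G$-martingale by the result recalled at the start of the paper) can be rewritten as $\widetilde K_t=K_t-\underline{\sigma}^2A_t$, where $A_t:=\int_0^t\eta_s\,ds$; in particular $\underline{\sigma}^2A_t=K_t-\widetilde K_t$.

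Next I would show that $A$ is a $G$-martingale. Fix $0\le s\le t$ and write $\underline{\sigma}^2(A_t-A_s)=(K_t-K_s)-(\widetilde K_t-\widetilde K_s)$. By sub-additivity of the conditional $G$-expectation, $\mathbb{E}_s[(K_t-K_s)-(\widetilde K_t-\widetilde K_s)]\ge\mathbb{E}_s[K_t-K_s]-\mathbb{E}_s[\widetilde K_t-\widetilde K_s]=0$, since $K$ and $\widetilde K$ are $G$-martingales; on the other hand $A_t-A_s=\int_s^t\eta_u\,du\le 0$ because $\eta\le 0$, hence $\mathbb{E}_s[A_t-A_s]\le 0$ by monotonicity. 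Therefore $\mathbb{E}_s[A_t]=A_s$, i.e.\ $A_t=\int_0^t\eta_s\,ds$ is a $G$-martingale, and it is non-increasing since $\eta\le 0$. As $\eta\in M^1_G(0,T)$, Theorem~\ref{Mthm} gives $A\equiv 0$, that is $\eta=0$ $ds\times q.s.$-a.e. Finally, $\mathbb{E}[\int_0^T|\eta_s|\,d\langle B\rangle_s]\le\overline{\sigma}^2\,\mathbb{E}[\int_0^T|\eta_s|\,ds]=0$, so $\int_0^T|\eta_s|\,d\langle B\rangle_s=0$ $q.s.$, and since $|K_t|\le\int_0^T|\eta_s|\,d\langle B\rangle_s$ we conclude $K\equiv 0$.

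The delicate step is the first one: translating the quasi-sure monotonicity of the paths of $K$ into the sign condition $\eta\le 0$ on the integrand, which requires the explicit structure of the representing measures $P_h$ together with the standing nondegeneracy assumption $\underline{\sigma}>0$. Everything afterwards is a short computation resting on the sub-/super-additivity of $\mathbb{E}_s$ and on Theorem~\ref{Mthm}.
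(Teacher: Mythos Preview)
Your proof is correct and follows essentially the same strategy as the paper: reduce to showing that $A_t=\int_0^t\eta_s\,ds$ (the paper calls it $L_t$) is a non-increasing $G$-martingale, then invoke Theorem~\ref{Mthm}. The paper's argument is the terse two-line chain $0\ge\mathbb{E}_s[L_t-L_s]\ge\frac{1}{\underline{\sigma}^2}\mathbb{E}_s[K_t-K_s]=0$, which implicitly uses the pathwise bound $\eta_u\,du\ge\frac{1}{\underline{\sigma}^2}\eta_u\,d\langle B\rangle_u$ (valid because $\eta\le 0$ and $d\langle B\rangle_u\ge\underline{\sigma}^2\,du$); your route via the auxiliary martingale $\widetilde K=K-\underline{\sigma}^2A$ and sub-additivity of $\mathbb{E}_s$ is a repackaging of the same inequality. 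The only substantive difference is that you spell out the step $\eta\le 0$ explicitly, which the paper leaves to the reader.
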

\begin {proof} Set $L_t=\int_0^t\eta_sds$. Assume that $K$ is a non-increasing $G$-martingale. Then we have
\[0\geq\mathbb{E}_s[L_t-L_s]\geq\frac{1}{\underline{\sigma}^2}\mathbb{E}_s[K_t-K_s]=0.\] So $L$ is a non-increasing $G$-martingale. By Theorem \ref {Mthm}, we get $L\equiv0$, and consequently, $K\equiv0.$
\end {proof}

As an application of Theorem \ref {Mthm}, we  shall prove the uniqueness of the decomposition for generalized $G$-It\^o processes.

\begin {definition} A process of the following form  is called a generalized $G$-It\^o process:
\[u=u_0+\int_{0}^{t}\eta_sds+\int_{0}^{t}\zeta_sdB_{s}+K_{t},\] where $\eta\in M^1_G(0,T)$, $\zeta\in H^1_G(0,T)$ and $K$ is a non-increasing $G$-martingale.
\end {definition}
\begin {remark} A $G$-It\^o process \[u=u_0+\int_{0}^{t}\tau_sds+\int_{0}^{t}\zeta_sdB_{s}+\int_0^t\frac{1}{2}\gamma_sd \langle B\rangle_s, \ \tau, \gamma\in M^1_G(0,T), \ \zeta\in H^1_G(0,T),\] can be rewritten as
\[u=u_0+\int_{0}^{t}(\tau_s+G(\gamma_s))ds+\int_{0}^{t}\zeta_sdB_{s}+K_t,\] where
$K_t=\int_0^t\frac{1}{2}\gamma_sd \langle B\rangle_s-\int_0^tG(\gamma_s)ds$, which, as is known, is a non-increasing $G$-martingale. So a $G$-It\^o process is a generalized $G$-It\^o process.
\end {remark}
By Corollary 3.5 in Song (2012) we
conclude that the decomposition for $G$-It\^o processes is unique. The next result shows the uniqueness of the decomposition for generalized $G$-It\^o processes.

\begin {theorem} \label {gG-Ito}  Assume $\int_0^t\zeta_sdB_s+\int_{0}^{t}\eta_{s}d s+K_{t}= L_{t}$, where
$\zeta\in H^1_G(0,T), \eta \in M^{1}_{G}(0,T)$, and $K_{t}, L_{t}$ are non-increasing
$G$-martingales. Then we have $\int_0^t\zeta_sdB_s\equiv0$, $\int_{0}^{t}%
\eta_{s}d s\equiv0$ and $K_{t}= L_{t}$.
\end {theorem}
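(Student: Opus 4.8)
First I would normalize. Evaluating the identity at $t=0$ gives $K_0=L_0$, and since a $G$-martingale is constant at time $0$ I may subtract this common constant from $K$ and $L$ and assume $K_0=L_0=0$. After this the argument splits into two essentially independent parts: first kill the stochastic integral, and then, with the equation reduced to $\int_0^t\eta_sds=L_t-K_t$, kill $\eta$ by appealing to Theorem \ref{Mthm}.

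For the first part I would rewrite the hypothesis as $\int_0^t\zeta_sdB_s=L_t-K_t-\int_0^t\eta_sds$ and observe that q.s.\ the right-hand side is a continuous process of finite variation on $[0,T]$, being a difference of two non-increasing continuous paths plus an absolutely continuous one of total variation $\int_0^T|\eta_s|\,ds<\infty$. Hence the continuous process $\int_0^\cdot\zeta_sdB_s$ is of finite variation q.s., so it has zero quadratic variation q.s.; since this quadratic variation equals $\int_0^\cdot\zeta_s^2\,d\langle B\rangle_s$ and $d\langle B\rangle_s\geq\underline{\sigma}^2\,ds$ with $\underline{\sigma}>0$, I obtain $\int_0^T\zeta_s^2\,ds=0$ q.s., that is $\zeta\equiv0$ in $H^1_G(0,T)$ and $\int_0^\cdot\zeta_sdB_s\equiv0$. (Alternatively one could cite the uniqueness of the $G$-It\^o decomposition from \cite{Song12}.)

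It remains to handle $\int_0^t\eta_sds=L_t-K_t$, and this is the heart of the matter. Writing $\eta=\eta^+-\eta^-$, the key is a pathwise observation: for q.s.\ $\omega$ the identity $\int_0^t\eta_s\,ds=L_t-K_t$ says, at the level of measures on $[0,T]$, that $\eta_s\,ds=dL_s-dK_s$; since $L$ is non-increasing, $dL_s\leq0$, so $\eta_s\,ds\leq-dK_s$, and passing to positive parts (the positive part of a signed measure dominated by a non-negative measure is itself dominated by it) gives $\eta_s^+\,ds\leq-dK_s$. In particular $\int_s^t\eta_u^+\,du\leq K_s-K_t$ q.s.\ for all $0\leq s\leq t\leq T$. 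Applying $\mathbb{E}_s$ to the two-sided bound $0\leq\int_s^t\eta_u^+\,du\leq K_s-K_t$ yields $0\geq\mathbb{E}_s[-\int_s^t\eta_u^+\,du]\geq\mathbb{E}_s[K_t-K_s]=0$, the last equality because $K$ is a $G$-martingale; hence $\mathbb{E}_s[-\int_s^t\eta_u^+\,du]=0$, and therefore $\mathbb{E}_s[-\int_0^t\eta_u^+\,du]=-\int_0^s\eta_u^+\,du$. In other words $-\int_0^\cdot\eta_s^+\,ds$ is a non-increasing $G$-martingale, so Theorem \ref{Mthm} forces $\int_0^\cdot\eta_s^+\,ds\equiv0$, i.e.\ $\eta^+\equiv0$. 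Interchanging the roles of $K$ and $L$ — now $-dK_s\geq0$ gives $\eta_s\,ds\geq dL_s$, hence $\eta_s^-\,ds\leq-dL_s$ — yields $\eta^-\equiv0$ in the same way. Thus $\eta\equiv0$, so $\int_0^t\eta_s\,ds\equiv0$ and $K_t=L_t$, which together with the first part completes the proof.

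I expect the only real obstacle to be the pathwise domination $\eta_s^+\,ds\leq-dK_s$ in the last paragraph: this is exactly what upgrades $-\int_0^\cdot\eta_s^+\,ds$ from a mere $G$-submartingale (which would follow trivially from $\eta^+\geq0$) to a genuine non-increasing $G$-martingale, thereby bringing it into the scope of Theorem \ref{Mthm}. Everything else is routine: the normalization $K_0=L_0=0$; the quadratic-variation argument for $\zeta\equiv0$; and the standard monotonicity of $\mathbb{E}_s$ together with the pull-out of an $L^1_G(\Omega_s)$-measurable summand, used to pass from $\mathbb{E}_s[-\int_s^t\eta_u^+\,du]=0$ to $\mathbb{E}_s[-\int_0^t\eta_u^+\,du]=-\int_0^s\eta_u^+\,du$.
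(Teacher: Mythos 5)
Your proposal is correct and follows essentially the same route as the paper: kill the stochastic integral by the classical uniqueness of the semimartingale decomposition (your quadratic-variation argument is just that uniqueness made explicit), then show $-\int_0^\cdot\eta_s^+\,ds$ is itself a non-increasing $G$-martingale via the domination $\eta_s^+\,ds\leq -dK_s$ and the sandwich $0\geq\mathbb{E}_s[-\int_s^t\eta_u^+\,du]\geq\mathbb{E}_s[K_t-K_s]=0$, and invoke Theorem \ref{Mthm}. Your explicit measure-theoretic justification of the pathwise domination is exactly what underlies the paper's one-line claim that $\int_0^t\eta_s^+\,ds+K_t$ is non-increasing.
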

\begin {proof} By the uniqueness for the decomposition for (classical) continuous semimartingales, we get $\int_0^t\zeta_sdB_s\equiv0$.
Assume $\int_{0}^{t}\eta_{s}ds+K_{t}=L_{t}$. Since $L_{t}$ is
non-increasing, $\tilde{L}_{t}:=\int_{0}^{t}\eta_{s}^{+}ds+K_{t}$ is
also non-increasing, which implies that $-\int_{s}^{t}\eta_{r}^{+}dr\geq K_t-K_s$ for any $s<t$. Noting that
 $0\geq\mathbb{E}_s[-\int_{s}^{t}\eta_{r}^{+}dr]\geq\mathbb{E}_s[K_{t}-K_{s}]=0$ since $K$ is a $G$-martingale, we conclude that $-\int_{0}^{t}\eta_{s}^{+}ds$ is also a $G$-martingale, which implies, by Theorem \ref {Mthm},
that
$\int_{0}^{t}\eta_{s}^{+}ds=0$. By the same arguments, we have $\int_{0}%
^{t}\eta_{s}^{-}ds=0$.

\end {proof}

\begin {corollary} \label {cor-uni} Assume that $K_t:=\int_0^t\eta_sd \langle B\rangle_s-\int_0^t\zeta_sds$, $\eta, \zeta\in M^1_G(0,T)$, is a non-increasing $G$-martingale. Then we have $\zeta\equiv2G(\eta)$.
\end {corollary}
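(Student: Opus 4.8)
The plan is to reduce this statement directly to the uniqueness of the decomposition for generalized $G$-It\^o processes (Theorem \ref{gG-Ito}), using the known fact recalled in the introduction.

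First, since $G$ is Lipschitz continuous, $\eta\in M^1_G(0,T)$ implies $G(\eta)\in M^1_G(0,T)$, so the process
\[L_t:=\int_0^t\eta_sd\langle B\rangle_s-\int_0^t2G(\eta_s)ds\]
is well defined and, as is classical, is a non-increasing $G$-martingale. Now subtract $L$ from the given process $K$. The $\int_0^t\eta_sd\langle B\rangle_s$ terms cancel, leaving
\[K_t-L_t=\int_0^t\bigl(2G(\eta_s)-\zeta_s\bigr)ds,\qquad t\in[0,T],\]
and rearranging gives $\int_0^t(2G(\eta_s)-\zeta_s)ds+L_t=K_t$. Here the integrand $2G(\eta)-\zeta$ belongs to $M^1_G(0,T)$ (again by $\eta,\zeta\in M^1_G(0,T)$ and the Lipschitz continuity of $G$), and both $L$ and $K$ are non-increasing $G$-martingales, so this identity is exactly of the form treated in Theorem \ref{gG-Ito} with vanishing $dB$-integrand (equivalently, one could appeal to Theorem \ref{Mthm} after checking directly that $\int_0^t(2G(\eta_s)-\zeta_s)ds$ is a $G$-martingale, but invoking Theorem \ref{gG-Ito} avoids that step).

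Applying Theorem \ref{gG-Ito} we conclude $\int_0^t(2G(\eta_s)-\zeta_s)ds\equiv0$ on $[0,T]$, which means $2G(\eta_s)=\zeta_s$ for $dt\times$q.s.-a.e.\ $(t,\omega)$, i.e.\ $\zeta\equiv2G(\eta)$ in $M^1_G(0,T)$. The argument has no real obstacle: the only point requiring care is the verification that $2G(\eta)-\zeta\in M^1_G(0,T)$, so that Theorem \ref{gG-Ito} applies verbatim, and this is immediate from the assumptions together with $|G(a)-G(b)|\le\tfrac12\overline{\sigma}^2|a-b|$.
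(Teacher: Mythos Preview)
Your proof is correct and follows essentially the same route as the paper: define the auxiliary non-increasing $G$-martingale $L_t=\int_0^t\eta_s\,d\langle B\rangle_s-\int_0^t2G(\eta_s)\,ds$, observe that $K$ and $L$ differ by the $ds$-integral of $2G(\eta)-\zeta$, and apply Theorem~\ref{gG-Ito} to force this integral to vanish. The only difference is cosmetic (you check explicitly that $2G(\eta)-\zeta\in M^1_G(0,T)$, which the paper leaves implicit).
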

\begin {proof} Since $L_t:=\int_0^t\eta_sd \langle B\rangle_s-\int_0^t2G(\eta_s)ds=K_t+\int_0^t2G(\eta_s)-\zeta_s ds$ is a non-increasing $G$-martingale, by Theorem \ref {gG-Ito}, we get $\zeta\equiv2G(\eta)$.
\end {proof}

\begin {remark} \label {app} Theorem \ref {gG-Ito} turns out to be a very strong conclusion. Many important results in the context of $G$-expectation theory can be considered as its immediate corollaries.

1) Theorem 3.6 in \cite {Song13}: A process $A_t=\int_0^t\eta_sd \langle B\rangle_s$, $\eta\in M^p_G(0,T)$ for some $p>1$, has stationary and independent increments if and only if $A_t=c \langle B\rangle_t$ for some constant $c\in \mathbb{R}$.

\begin {proof} We only prove the ``only if" part.

Assume that $A$ is a process with stationary and independent increments. Then there exists a constant $\lambda\in \mathbb{R}$ such that $\mathbb{E}[A_t]=\lambda t$ and $L_t:=A_t-\lambda t$ is a non-increasing $G$-martingale. So  we conclude by Corollary \ref {cor-uni} that $\lambda=2G(\eta_s)$, which implies the desired conclusion.
\end {proof}

2) Corollary 3.5 in \cite {Song12}: If $\int_0^t\eta_s d\langle B\rangle_s=\int_0^t\zeta ds$ for some $\eta, \zeta\in M^1_G(0,T)$, we have
$\eta=\zeta \equiv 0.$

\begin {proof} By the assumption, we have
\[\int_0^t\eta_s d\langle B\rangle_s=:K_t+\int_0^t2G(\eta_s)ds=\int_0^t\zeta ds.\]
By Theorem \ref {gG-Ito}, we get $K_t=\int_0^t\eta_sd \langle B\rangle_s-\int_0^t2G(\eta_s)ds\equiv 0.$ For any $\varepsilon\in[0, \frac{\overline{\sigma}^2-\underline{\sigma}^2}{2}]$ we have
 \[0=\mathbb{E}[-K_T]\geq\mathbb{E}_{G_\varepsilon}[-K_T]\geq \varepsilon\mathbb{E}_{G_\varepsilon}[\int_0^T|\eta_s|ds],\] which implies $\eta\equiv0$, and consequently, $\zeta\equiv 0$.
\end {proof}

\end {remark}

\section {Application: characterization of  $G$-Sobolev space $W_{\mathcal{A}_{G}}^{\frac{1}{2},1;p}(0,T)$}

 Peng and Song  (2015) introduced the notion of $G$-Sobolev spaces, in which they defined solutions to a certain type of  path dependent PDEs.

Their definitions of $G$-Sobolev spaces started from the following spaces of smooth functions of paths.
 \begin{definition} \label {cylinderF}
A function $\xi:\Omega_{T}\rightarrow \mathbb{R}$ is called a cylinder function
of paths on $[0,T]$ if it can be represented by
\[
\xi(\omega)=\varphi(\omega(t_{1}),\cdots,\omega(t_{n})),\omega \in \Omega_{T},
\]
for some $0=t_0<\cdots<t_{n}=T$, where $\varphi:(\mathbb{R}^{d}%
)^{n}\rightarrow \mathbb{R}$ is a $C^{\infty}$-function with at most polynomial
growth. We denote by $C^{\infty}(\Omega_{T})$ the collection of all cylinder
functions of paths on $[0,T]$.
\end{definition}
  \begin{definition} A function $u(t,\omega
):[0,T]\times \Omega_{T}\rightarrow \mathbb{R}$ is called a cylinder path
process if there exists a time partition
$0=t_{0}<\cdots<t_{n}=T$, such that for each $k=0,1,\cdots,n-1$ and
$t\in(t_{k},t_{k+1}]$,
\[
u(t,\omega)=u_{k}(t,\omega(t);\omega(t_{1}),\cdots,\omega(t_{k})).
\]
Here for each $k$, the function $u_{k}:[t_{k},t_{k+1}]\times(\mathbb{R}%
^{d})^{k+1}\rightarrow \mathbb{R}$ is a $C^{\infty}$-function with
\[
u_{k}(t_{k},x;x_{1},\cdots,x_{k-1},x)=u_{k-1}(t_{k},x;x_{1},\cdots,x_{k-1})
\]
such that, all derivatives of $u_{k}$ have at most polynomial growth. We
denote by ${\mathcal{C}}^{\infty}(0,T)$ the collection of all cylinder
path processes.
\end{definition} For a function $u\in{\mathcal{C}}^{\infty}(0,T)$, set, for $t\in [t_k, t_{k+1}]$,
\begin{align}\mathcal{D}_{t}u(t,\omega):= & \partial_{t}u_{k}(t,x;x_{1},\cdots,x_{k}%
)|_{x=\omega(t),x_{1}=\omega(t_{1}),\cdots,x_{k}=\omega(t_{k})},\\
\mathcal{D}_{x}u(t,\omega):= &  \partial_{x}u_{k}(t,x;x_{1},\cdots
,x_{k})|_{x=\omega(t),x_{1}=\omega(t_{1}),\cdots,x_{k}=\omega(t_{k}%
)},\label{Dx}\\
\mathcal{D}_{x}^{2}u(t,\omega):= &  \partial_{x}^{2}u_{k}(t,x;x_{1}%
,\cdots,x_{k})|_{x=\omega(t),x_{1}=\omega(t_{1}),\cdots,x_{k}=\omega(t_{k}%
)}.
\end{align}
Set \[\mathcal{A}_{G}u(t,\omega):=\mathcal{D}_{t}%
u(t,\omega)+G(\mathcal{D}_{x}^{2}u(t,\omega)).\]

 \subsection {$G$-Sobolev spaces $W_{G}^{1,2;p}(0,T)$ introduced in \cite {PS15}}
 \begin{definition}
 1) For $u\in{\mathcal{C}}^{\infty}(0,T)$, we set
\[
\Vert u\Vert_{S_{G}^{p}}^{p}=\mathbb{E}[\sup_{s\in \lbrack0,T]}|u(s,\omega)%
|^{p}].
\]
We denote by $S_{G}^{p}(0,T)$ the completion of ${\mathcal{C}}^{\infty
}(0,T)$ w.r.t. the norm $\Vert \cdot \Vert_{S_{G}^{p}}$.

2) For $u\in{\mathcal{C}}^{\infty}(0,T)$, we set
\[
\Vert u\Vert_{W_{G}^{1,2;p}}^{p}=\mathbb{E}[\sup_{s\in \lbrack0,T]}%
|u_{s}|^{p}+\int_{0}^{T}(|\mathcal{D}_{s}u_{s}|^{p}+|\mathcal{D}_{x}^{2}u_{s}|^{p})ds+( \int
_{0}^{T}|\mathcal{D}_{x}u_{s}|^{2}ds)^{p/2}].
\] Denote by $W_{G}^{1,2;p}(0,T)$ the completion of ${\mathcal{C}}^{\infty}(0,T)$
w.r.t. the norm $\Vert \cdot \Vert_{W_{G}^{1,2;p}}$.
\end{definition}
Sometimes, we shall abuse notations by writing $u(t,\omega)$ as $u_t$ for simplicity.

By Corollary 3.5 in Song (2012) we
conclude that the decomposition for $G$-It\^o processes is unique: letting $\tau, \gamma\in M^1_G(0,T), \ \zeta\in H^1_G(0,T)$, then

 \[\int_{0}^{t}\tau_sds+\int_{0}^{t}\zeta_sdB_{s}+\int_0^t\frac{1}{2}\gamma_sd \langle B\rangle_s=0 \] implies that $\tau=\gamma\equiv 0$ and $\zeta\equiv 0$.

From this it is easily seen that the  norm $\Vert \cdot \Vert_{W_{G}^{1,2;p}}$ is closable in the
space $S_{G}^{p}(0,T)$: Let $u^{n}\in{\mathcal{C}}^{\infty}(0,T)$ be a Cauchy
sequence w.r.t. the norm $\Vert \cdot \Vert_{W_{G}^{1,2;p}}$. If $\Vert
u^{n}\Vert_{S_{G}^{p}}\rightarrow0$, we have $\Vert u^{n}\Vert_{W_{G}^{1,2;p}%
}\rightarrow0$.

\begin {remark} \label {keyremark} The closability of the  norm $\Vert \cdot \Vert_{W_{G}^{1,2;p}}$, which follows from the uniqueness of the decomposition for $G$-It\^o processes, is the key point to extend the definition of the operators $\mathcal{D}_{t}$, $\mathcal{D}_{x}$ $\mathcal{D}^2_{x}$ to the space $W_{G}^{1,2;p}(0,T)$. Precisely, unless the  norm $\Vert \cdot \Vert_{W_{G}^{1,2;p}}$ is closable,  a process $u\in S_{G}^{p}(0,T)$ may correspond to two different elements in $W_{G}^{1,2;p}(0,T)$, which can be represented as: $(u, \tau, \zeta, \gamma)$ and $(u, \tilde{\tau}, \tilde{\zeta}, \tilde{\gamma})$, $\tau, \tilde{\tau}, \gamma, \tilde{\gamma} \in M^p_G(0,T)$, $\zeta, \tilde{\zeta}\in H^p_G(0,T)$. For this case, we could not well-define the derivatives for $u$.
\end {remark}

So $W_{G}^{1,2;p}(0,T)$ can be considered as a subspace of $S_{G}^{p}(0,T)$, and the derivative  operators $\mathcal{D}_{t}$, $\mathcal{D}_{x}^2$ (resp.  $\mathcal{D}_{x}$),  can all be extended as continuous linear operators from $W_{G}^{1,2;p}(0,T)$ to $M^p_G(0,T)$ (resp. to $H^p_G(0,T)$).

 \begin {theorem}(Theorem 4.5 in \cite {PS15}) Assume $u\in S_{G}^{p}(0,T)$. Then the following two
conditions are equivalent:

(i) $u\in W_{G}^{1,2;p}(0,T)$;

(ii) There exists $u_0\in {\mathbb{R}}$, $\zeta,w\in M_{G}^{p}(0,T)$ and $v\in H_{G}^{p}(0,T)$ such that
\begin{equation}
u_t=u_0+\int_{0}^{t}\zeta_sds+\int_{0}^{t}%
v_sdB_{s}+\frac{1}{2}\int_{0}^{t}w_sd\langle B\rangle_{s}.\label{uzvw}
\end{equation}
Moreover, we have
\[
\mathcal{D}_{t}u_t=\zeta_t,\,\, \ \mathcal{D}_{x}u_t=v_t,\,\,\ \mathcal{D}_{x}%
^{2}u_t=w_t.
\]
 \end {theorem}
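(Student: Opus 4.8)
The plan is to prove the equivalence of (i) and (ii) by exhibiting each direction separately, with the uniqueness of the $G$-It\^o decomposition (as extended via Theorem \ref{gG-Ito} and the closability discussion preceding the statement) doing the heavy lifting for the well-definedness of the derivative operators.

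For the implication (i)$\Rightarrow$(ii), I would start from $u\in W_G^{1,2;p}(0,T)$, which by definition is a limit in $\Vert\cdot\Vert_{W_G^{1,2;p}}$ of cylinder path processes $u^n\in\mathcal{C}^\infty(0,T)$. For each $u^n$, It\^o's formula applied on each interval $(t_k,t_{k+1}]$ (and matching at the partition points by the consistency condition in the definition of $\mathcal{C}^\infty$) gives the identity
\[
u^n_t=u^n_0+\int_0^t\mathcal{D}_su^n_s\,ds+\int_0^t\mathcal{D}_xu^n_s\,dB_s+\frac12\int_0^t\mathcal{D}_x^2u^n_s\,d\langle B\rangle_s.
\]
Since $(\mathcal{D}_su^n)$, $(\mathcal{D}_x^2u^n)$ are Cauchy in $M_G^p(0,T)$ and $(\mathcal{D}_xu^n)$ is Cauchy in $H_G^p(0,T)$, they converge to some $\zeta,w\in M_G^p(0,T)$ and $v\in H_G^p(0,T)$; passing to the limit in the It\^o identity (using continuity of the stochastic and Lebesgue/quadratic-variation integrals in these norms, together with $\Vert u^n-u\Vert_{S_G^p}\to0$) yields (\ref{uzvw}). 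The formulas $\mathcal{D}_tu=\zeta$, $\mathcal{D}_xu=v$, $\mathcal{D}_x^2u=w$ then hold by the very definition of the extended operators as the $M_G^p$- resp. $H_G^p$-limits.

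For the converse (ii)$\Rightarrow$(i), suppose $u$ has the representation (\ref{uzvw}) with $\zeta,w\in M_G^p(0,T)$, $v\in H_G^p(0,T)$. One approximates $\zeta,w$ in $M_G^p$ and $v$ in $H_G^p$ by step processes $\zeta^n,w^n,v^n$ of the form appearing in the definition of $\mathcal{C}^\infty(0,T)$ (i.e. with coefficients that are smooth functions of finitely many increments of $B$), and then one wants to realize
\[
u^n_t=u_0+\int_0^t\zeta^n_s\,ds+\int_0^tv^n_s\,dB_s+\frac12\int_0^tw^n_s\,d\langle B\rangle_s
\]
as a genuine cylinder path process in $\mathcal{C}^\infty(0,T)$. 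This is a smoothing/matching issue: on each partition cell the process $u^n$ is indeed a smooth function of $(t,\omega(t),\omega(t_1),\dots,\omega(t_k))$ since the integrands are piecewise such functions, but one must also check the consistency condition $u^n_{k}(t_k,\cdot)=u^n_{k-1}(t_k,\cdot)$ at partition points (automatic here because the three integrals are continuous in $t$) and that all derivatives have polynomial growth (true when $\zeta^n,w^n,v^n$ are built from $C_{b,Lip}$ or polynomial-growth coefficients). Then $\Vert u^n-u\Vert_{S_G^p}\to0$ by the Burkholder--Davis--Gundy-type and $L_G^p$-estimates for $G$-stochastic integrals, and $u^n$ is Cauchy in $\Vert\cdot\Vert_{W_G^{1,2;p}}$ because $\mathcal{D}_su^n=\zeta^n$, $\mathcal{D}_xu^n=v^n$, $\mathcal{D}_x^2u^n=w^n$ converge in the appropriate norms; hence $u\in W_G^{1,2;p}(0,T)$.

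The main obstacle is the identification step: namely, that the limit object really satisfies $\mathcal{D}_tu=\zeta$ etc., i.e. that $(u,\zeta,v,w)$ is unambiguous. This is exactly where the uniqueness of the decomposition for $G$-It\^o processes is indispensable --- if $u$ admitted two representations $(u_0,\zeta,v,w)$ and $(u_0',\zeta',v',w')$ then subtracting gives $\int_0^t(\zeta_s-\zeta_s')ds+\int_0^t(v_s-v_s')dB_s+\frac12\int_0^t(w_s-w_s')d\langle B\rangle_s\equiv0$, and rewriting the $d\langle B\rangle$ term as $\int_0^t G(w_s-w_s')ds$ plus a non-increasing $G$-martingale $K$ (with $-K$ also such, so both $K$ and $-K$ are non-increasing $G$-martingales), Theorem \ref{gG-Ito} forces $v\equiv v'$, $\zeta-\zeta'+G(w-w')\equiv0$ and $K\equiv0$; a second application (or Corollary \ref{cor-uni}) on $\frac12\int_0^t(w_s-w_s')d\langle B\rangle_s-\int_0^tG(w_s-w_s')ds$ then gives $w\equiv w'$ and hence $\zeta\equiv\zeta'$. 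So the derivative operators are well-defined on $W_G^{1,2;p}(0,T)$, and the formulas in the theorem are consistent. The remaining analytic points (passing to limits in the integrals, smoothing step processes, growth conditions) are routine given the standard $G$-integration estimates.
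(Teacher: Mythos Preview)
The paper does not actually prove this theorem; it is quoted as Theorem 4.5 from \cite{PS15} without proof. Nonetheless, your argument for (ii)$\Rightarrow$(i) contains a genuine gap, and the paper's proof of the closely analogous Proposition \ref{Sobolev-1st} shows how to repair it.

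Your claim that
\[
u^n_t=u_0+\int_0^t\zeta^n_s\,ds+\int_0^tv^n_s\,dB_s+\tfrac12\int_0^tw^n_s\,d\langle B\rangle_s
\]
lies in $\mathcal{C}^\infty(0,T)$ is not correct: the quadratic variation $\langle B\rangle_t$ is \emph{not} a smooth (or even measurable) function of $(t,\omega(t),\omega(t_1),\dots,\omega(t_k))$ for any finite partition --- in the $G$-framework it is a genuinely path-dependent, non-deterministic process. Hence even when $w^n$ is a smooth step process, the integral $\int_0^t w^n_s\,d\langle B\rangle_s$ is not a cylinder path process, and your sentence ``on each partition cell the process $u^n$ is indeed a smooth function of $(t,\omega(t),\omega(t_1),\dots,\omega(t_k))$'' fails precisely here.

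The device that fixes this (used in the proof of Proposition \ref{Sobolev-1st}) is to replace $\langle B\rangle$ by its discrete approximation $Q^n_t:=\sum_{k}(B_{t^n_{k+1}\wedge t}-B_{t^n_k\wedge t})^2$, which \emph{is} a polynomial in finitely many values of $\omega$. Writing $Q^n_t=\int_0^t\lambda^n_s\,dB_s+\langle B\rangle_t$ with $\lambda^n_t=\sum_k 2(B_t-B_{t^n_k})1_{]t^n_k,t^n_{k+1}]}(t)$, one defines $u^n$ with $\tfrac12\int_0^t w_s\,dQ^n_s$ in place of $\tfrac12\int_0^t w_s\,d\langle B\rangle_s$; this $u^n$ is now a bona fide element of $\mathcal{C}^\infty(0,T)$, and one reads off $\mathcal{D}_t u^n,\mathcal{D}_x u^n,\mathcal{D}_x^2 u^n$ from the uniqueness of the $G$-It\^o decomposition before passing to the limit. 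Your (i)$\Rightarrow$(ii) direction and the uniqueness discussion are essentially fine, though for ordinary (non-generalized) $G$-It\^o processes the paper appeals directly to Corollary 3.5 of \cite{Song12} rather than routing through Theorem \ref{gG-Ito}.
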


In \cite {PS15}, the authors defined $W_{G}^{1,2;p}(0,T)$-solutions to path dependent PDEs and established one-one correspondence
between backward SDEs under $G$-expectation and a certain type of path dependent PDEs.

\textbf{Backward SDEs:} to find $Y\in S_{G}^{p}(0,T)$, $Z\in H_{G}^{p}(0,T),\eta \in M_{G}%
^{p}(0,T)$ such that
\begin{equation}
Y_{t}=\xi+\int_{t}^{T}f(s,Y_{s},Z_{s},\eta_{s})ds-\int_{t}^{T}Z_{s}%
dB_{s}-(\int_t^T\frac{1}{2}\eta_sd\langle B\rangle_s-\int_t^TG(\eta_s)ds), \label{GBSDE}%
\end{equation}
where  $f:{\normalsize [0,T]\times}\mathbb{R\times R}^{d}%
\times \mathbb{S}{\normalsize (d)\mapsto}\mathbb{R}$ is a given function and
$\xi \in L_{G}^{p}(\Omega_{T})$ is a given random variable.

\textbf{Path Dependent PDEs:} to find a path-dependent $u\in
W_{G}^{1,2;p}(0,T)$ such that
\begin{align}\label{fPPDE}
\begin {split}
\mathcal{D}_{t}u+G(\mathcal{D}_{x}^{2}u)+f(t,u,\mathcal{D}_{x}u,\mathcal{D}_{x}^{2}u)  &  =0,\  \ t\in \lbrack
0,T),\\
u_T  &  =\xi.
\end {split}
\end {align}
We assume that
$f(t,\omega,Y_{t},Z_{t},\eta_{t})\in M_{G}^{p}(0,T)$ for any $(Y,Z,\eta)\in
S_{G}^{p}(0,T)\times H_{G}^{p}(0,T)\times M_{G}^{p}(0,T)$.
\begin{theorem} \label {cor-2nd} (Theorem 4.9 in \cite {PS15})
 Let $(Y,Z,\eta)$ be a solution to the backward SDE
(\ref{GBSDE}). Then we have $u_t:=Y_{t}\in W_{G}^{1,2;p}(0,T)$
with $\mathcal{D}_{x}u_t=Z_{t}$ and $\mathcal{D}_{x}^{2}u_t=\eta
_{t}$.

Moreover, Given a $u \in W_{G}^{1,2;p}(0,T)$, the following conditions
are equivalent:

(i) $(u, \mathcal{D}_{x} u, \mathcal{D}^{2}_{x} u)$ is a solution to the backward SDE (\ref{GBSDE});

(ii) $u$ is a $W_{G}^{1,2;p}$-solution to the path dependent PDE
(\ref{fPPDE}).
\end{theorem}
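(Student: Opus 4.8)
The plan is to obtain both parts of the statement by passing back and forth between the integral form of the $G$-BSDE (\ref{GBSDE}) and the forward $G$-It\^o representation (\ref{uzvw}), relying on two ingredients already available: Theorem 4.5 of \cite{PS15} (quoted above), which characterizes membership in $W_{G}^{1,2;p}(0,T)$ for a process in $S_{G}^{p}(0,T)$ and at the same time identifies $\mathcal{D}_{t}u,\mathcal{D}_{x}u,\mathcal{D}_{x}^{2}u$ from that representation; and the uniqueness of the decomposition of generalized $G$-It\^o processes, Theorem \ref{gG-Ito}, which is what makes those derivatives well defined. Beyond these, the argument is a rearrangement of (\ref{GBSDE}) into (\ref{uzvw}) and back, together with the routine check that the coefficients produced stay in $M_{G}^{p}$, $H_{G}^{p}$, $S_{G}^{p}$.

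For the first assertion I would take a solution $(Y,Z,\eta)$ of (\ref{GBSDE}) and subtract the identity (\ref{GBSDE}) evaluated at $t=0$ from the same identity at a general $t$, which gives the forward form
\[Y_{t}=Y_{0}+\int_{0}^{t}(-f(s,Y_{s},Z_{s},\eta_{s})-G(\eta_{s}))\,ds+\int_{0}^{t}Z_{s}\,dB_{s}+\frac{1}{2}\int_{0}^{t}\eta_{s}\,d\langle B\rangle_{s}.\]
Here $Y_{0}\in\mathbb{R}$, $Z\in H_{G}^{p}(0,T)$ and $\eta\in M_{G}^{p}(0,T)$ by the definition of a solution, while $-f(\cdot,Y,Z,\eta)-G(\eta)\in M_{G}^{p}(0,T)$ since $f(\cdot,Y,Z,\eta)\in M_{G}^{p}(0,T)$ by the standing assumption on $f$ and $G$ is Lipschitz. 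As $Y\in S_{G}^{p}(0,T)$, Theorem 4.5 of \cite{PS15} (its direction (ii)$\Rightarrow$(i) together with the ``moreover'' clause) then gives $Y\in W_{G}^{1,2;p}(0,T)$ with $\mathcal{D}_{x}Y=Z$ and $\mathcal{D}_{x}^{2}Y=\eta$ (and incidentally $\mathcal{D}_{t}Y=-f(\cdot,Y,Z,\eta)-G(\eta)$).

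For the equivalence, I would fix $u\in W_{G}^{1,2;p}(0,T)$, so that by Theorem 4.5 it satisfies (\ref{uzvw}) with $\zeta=\mathcal{D}_{t}u$, $v=\mathcal{D}_{x}u$, $w=\mathcal{D}_{x}^{2}u$. If (i) holds, rewriting (\ref{GBSDE}) for $(u,\mathcal{D}_{x}u,\mathcal{D}_{x}^{2}u)$ in the forward form above yields a second representation of $u$ having the same $dB_{s}$- and $d\langle B\rangle_{s}$-integrands but drift $-f(\cdot,u,\mathcal{D}_{x}u,\mathcal{D}_{x}^{2}u)-G(\mathcal{D}_{x}^{2}u)$; subtracting the two representations and invoking uniqueness of the decomposition (Theorem \ref{gG-Ito}; in fact only uniqueness of the $ds$-drift is needed here, which is elementary, as a process $\int_{0}^{\cdot}\theta_{s}\,ds$ vanishing identically forces $\theta\equiv0$) gives $\mathcal{D}_{t}u+G(\mathcal{D}_{x}^{2}u)+f(\cdot,u,\mathcal{D}_{x}u,\mathcal{D}_{x}^{2}u)=0$, that is, (\ref{fPPDE}); the terminal condition $u_{T}=\xi$ is the case $t=T$ of (\ref{GBSDE}). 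Conversely, if (ii) holds I would substitute $\mathcal{D}_{t}u=-G(\mathcal{D}_{x}^{2}u)-f(\cdot,u,\mathcal{D}_{x}u,\mathcal{D}_{x}^{2}u)$ into (\ref{uzvw}), subtract the resulting identity at $t$ from the one at $T$, and use $u_{T}=\xi$; this reproduces (\ref{GBSDE}) for $(u,\mathcal{D}_{x}u,\mathcal{D}_{x}^{2}u)$, all the processes involved lying in the right spaces by Theorem 4.5 and the assumption on $f$.

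The point I expect to require the most care is not any single estimate but the consistency underlying the whole picture: the derivatives $\mathcal{D}_{t}u,\mathcal{D}_{x}u,\mathcal{D}_{x}^{2}u$ attached to $u\in W_{G}^{1,2;p}(0,T)$ must be unambiguous for statements such as ``$\mathcal{D}_{x}u=Z$'' or ``$(u,\mathcal{D}_{x}u,\mathcal{D}_{x}^{2}u)$ solves (\ref{GBSDE})'' to even make sense. This is precisely the closability of $\|\cdot\|_{W_{G}^{1,2;p}}$ over $S_{G}^{p}(0,T)$ discussed in Remark \ref{keyremark}, which rests on the uniqueness of the $G$-It\^o decomposition --- Corollary 3.5 of \cite{Song12}, subsumed by Theorem \ref{gG-Ito} of the present paper. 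Once that is granted, both halves of the theorem reduce to the rearrangements above.
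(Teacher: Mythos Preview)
The paper does not supply its own proof of this statement: Theorem~\ref{cor-2nd} is simply quoted as Theorem~4.9 of \cite{PS15}, with no accompanying argument in the present text. Your proposal is therefore not being compared against anything here, but it is correct and is exactly the natural derivation one expects in \cite{PS15}: rewrite the backward equation (\ref{GBSDE}) in forward form as a $G$-It\^o process, read off $\mathcal{D}_{t}u,\mathcal{D}_{x}u,\mathcal{D}_{x}^{2}u$ via the characterization Theorem~4.5, and pass in the other direction by substitution. Your remark that the well-posedness of the derivatives is the only genuinely delicate point, and that it is secured by the uniqueness of the $G$-It\^o decomposition, is also on target.
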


\subsection {Characterization of  $G$-Sobolev space $W_{\mathcal{A}_{G}}^{\frac{1}{2},1;p}(0,T)$}

Now we consider a special case of the path dependent PDE (\ref{fPPDE}):  \emph{$f$ is independent of $\mathcal{D}^{2}_{x}u$.}
\begin{align}
\begin {split}\label {fPPDE-special}
\mathcal{D}_{t}u+ G(\mathcal{D}^{2}_{x} u)+f(t,u,\mathcal{D}_{x} u)  &  =0, \  \ t\in[0,T),\\
u_T  &  =\xi. %
\end {split}
\end{align}

Let $u\in W_{G}^{1,2;p}(0,T)$ be a solution to the path dependent PDE
(\ref{fPPDE-special}). By Theorem \ref{cor-2nd}, the processes
\[
Y_{t}:=u_t,\,\,\, Z_{t}:=\mathcal{D}_{x}u_t,\,\,\, K_{t}:=\frac{1}{2}\int_{0}^{t}%
\mathcal{D}_{x}^{2}u_sd\langle B\rangle_{s}-\int_{0}^{t}G(\mathcal{D}_{x}^{2}%
u_s)ds
\]
satisfy the following backward SDE:
\begin{equation}
Y_{t}=\xi+\int_{t}^{T}f(s,Y_{s},Z_{s})ds-\int_{t}^{T}Z_{s}dB_{s}-(K_{T}%
-K_{t}), \label{GBSDE-special}%
\end{equation}
which is noting but the  backward SDEs driven by $G$-Brownian motion ($G$-BSDE)  studied in \cite{HJPS}.

On the contrary, letting $(Y,Z,K)$ be a solution of backward SDE (\ref{GBSDE-special}) considered in \cite{HJPS}, notice that, although we have many interesting examples, but it is still a very  interesting and challenging problem to give  reasonable conditions on $\xi$ and $f$ such that  $Y$ lies in the Sobolev space $W_{G}^{1,2;p}(0,T)$. Even so, we still think $u=Y$ is a reasonable candidate of the solution to Equ. (\ref {fPPDE-special}).

 In \cite {PS15}, the authors  formulated $u=Y$  as the unique
solution to Equ. (\ref {fPPDE-special}) in a first order Sobolev space $W_{\mathcal{A}_{G}}^{\frac{1}%
{2},1;p}(0,T)$. In this section, we shall refine the definition of the $G$-Sobolev space $W_{\mathcal{A}_{G}}^{\frac{1}%
{2},1;p}(0,T)$. The main idea is, just like the liner case, to integrate $\mathcal{A}_Gu=\mathcal{D}_{t}u+ G(\mathcal{D}^{2}_{x} u)$ as one operator, which reduces the regularity requirement for the solutions. To well define the derivative $\mathcal{A}_Gu$ for $u\in W_{\mathcal{A}_{G}}^{\frac{1}%
{2},1;p}(0,T)$, the uniqueness of the decomposition for generalized $G$-It\^o processes plays a crucial role.

\subsubsection {Definition of the $G$-Sobolev space $W_{\mathcal{A}_{G}}^{\frac{1}%
{2},1;p}(0,T)$ }

 For
$u,v\in {\mathcal{C}}^{\infty}(0,T)$, set
\[
d_{G,p}^{p}(u,v)=\mathbb{E}[\sup_{s\in\lbrack0,T]}|u_{s}-v_{s}%
|^{p}+(\int_{0}^{T}|\mathcal{D}_{x}(u_{s}-v_{s})|^{2}ds)^{\frac{p}{2}}+\int_{0}^{T}|{\mathcal{A}}_{G}u_{s}-{\mathcal{A}}_{G}v_{s}|^{p}ds].
\]

By the uniqueness of the  decomposition for generalized $G$-It\^o processes we obtain the closability of the metric $d_{G,p}$. As is stated in Remark \ref {keyremark}, closability of the metric $d_{G,p}$ is the key point to well define the operators $\mathcal{A}_G$, $\mathcal{D}_x$.

\begin{proposition}
 The metric $d_{G,p}$ is closable in the space $S_{G}%
^{p}(0,T)$: Let $u^{n}, v^{n}\in{\mathcal{C}}^{\infty}(0,T)$ be two
Cauchy
sequences w.r.t. the metric $d_{G,p}$. If $\|u^{n}-v^{n}\|_{S_{G}^{p}%
}\rightarrow0$, we have $d_{G,p}(u^{n},v^{n})\rightarrow0$.
\end{proposition}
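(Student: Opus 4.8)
The plan is to reduce the closability of $d_{G,p}$ to the uniqueness of the decomposition for generalized $G$-It\^o processes (Theorem~\ref{gG-Ito}). Writing
\[
d_{G,p}(u,v)^{p}=\mathbb{E}\Big[\sup_{s}|u_{s}-v_{s}|^{p}+\Big(\int_{0}^{T}|\mathcal{D}_{x}(u_{s}-v_{s})|^{2}ds\Big)^{p/2}+\int_{0}^{T}|\mathcal{A}_{G}u_{s}-\mathcal{A}_{G}v_{s}|^{p}ds\Big],
\]
subadditivity of $t\mapsto t^{1/p}$ and Minkowski's inequality for $\mathbb{E}$ give $d_{G,p}(u^{n},v^{n})\le\|u^{n}-v^{n}\|_{S_{G}^{p}}+\|\mathcal{D}_{x}u^{n}-\mathcal{D}_{x}v^{n}\|_{H_{G}^{p}}+\|\mathcal{A}_{G}u^{n}-\mathcal{A}_{G}v^{n}\|_{M_{G}^{p}}$. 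Because $\mathcal{A}_{G}$ is nonlinear, one cannot pass to the single sequence $u^{n}-v^{n}$; the idea is instead to identify the limits of both Cauchy sequences and show they carry the same first-order data.

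First I would extract limits. Since $(u^{n})$ is $d_{G,p}$-Cauchy, $(u^{n})$, $(\mathcal{D}_{x}u^{n})$, $(\mathcal{A}_{G}u^{n})$ are Cauchy in $S_{G}^{p}(0,T)$, $H_{G}^{p}(0,T)$, $M_{G}^{p}(0,T)$; denote the limits by $u,v,a$. Likewise $v^{n}\to\bar{u}$ in $S_{G}^{p}$, $\mathcal{D}_{x}v^{n}\to\bar{v}$ in $H_{G}^{p}$, $\mathcal{A}_{G}v^{n}\to\bar{a}$ in $M_{G}^{p}$. The hypothesis $\|u^{n}-v^{n}\|_{S_{G}^{p}}\to0$ forces $u=\bar{u}$, and in particular the constants $u^{n}_{0},v^{n}_{0}$ converge to a common $u_{0}\in\mathbb{R}$. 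The problem then reduces to proving $v=\bar{v}$ in $H_{G}^{p}$ and $a=\bar{a}$ in $M_{G}^{p}$: inserting these common limits into the three-term bound, the first term vanishes by hypothesis while $\|\mathcal{D}_{x}u^{n}-\mathcal{D}_{x}v^{n}\|_{H_{G}^{p}}\le\|\mathcal{D}_{x}u^{n}-v\|_{H_{G}^{p}}+\|v-\bar{v}\|_{H_{G}^{p}}+\|\bar{v}-\mathcal{D}_{x}v^{n}\|_{H_{G}^{p}}\to0$, and similarly for the $\mathcal{A}_{G}$-term.

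To obtain these two equalities I would put It\^o's formula into ``$\mathcal{A}_{G}$-form''. For $u^{n}\in{\mathcal{C}}^{\infty}(0,T)$, It\^o's formula together with the identity $G(a)=2G(\tfrac{1}{2}a)$ yields
\[
u^{n}_{t}=u^{n}_{0}+\int_{0}^{t}\mathcal{A}_{G}u^{n}_{s}\,ds+\int_{0}^{t}\mathcal{D}_{x}u^{n}_{s}\,dB_{s}+K^{n}_{t},\qquad K^{n}_{t}:=\tfrac{1}{2}\int_{0}^{t}\mathcal{D}_{x}^{2}u^{n}_{s}\,d\langle B\rangle_{s}-\int_{0}^{t}G(\mathcal{D}_{x}^{2}u^{n}_{s})\,ds,
\]
where $K^{n}$ is a non-increasing $G$-martingale (the standard fact recalled in the Introduction, with $\eta=\tfrac{1}{2}\mathcal{D}_{x}^{2}u^{n}$). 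Letting $n\to\infty$: the $ds$-integral converges in $S_{G}^{p}$ by H\"older's inequality in time, the $dB_{s}$-integral converges in $S_{G}^{p}$ by a Burkholder--Davis--Gundy type inequality, hence $K^{n}$ converges in $S_{G}^{p}$ to $K:=u_{\cdot}-u_{0}-\int_{0}^{\cdot}a_{s}\,ds-\int_{0}^{\cdot}v_{s}\,dB_{s}$, with $K_{T}\in L_{G}^{p}$. Continuity of the conditional $G$-expectation on $L_{G}^{p}$, together with passage to the limit in $K^{n}_{t}\le K^{n}_{s}$, shows that $K$ is again a non-increasing $G$-martingale, so $u_{t}=u_{0}+\int_{0}^{t}a_{s}\,ds+\int_{0}^{t}v_{s}\,dB_{s}+K_{t}$; the same argument applied to $(v^{n})$ gives $u_{t}=\bar{u}_{t}=u_{0}+\int_{0}^{t}\bar{a}_{s}\,ds+\int_{0}^{t}\bar{v}_{s}\,dB_{s}+\bar{K}_{t}$ with $\bar{K}$ a non-increasing $G$-martingale. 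Subtracting,
\[
\int_{0}^{t}(v_{s}-\bar{v}_{s})\,dB_{s}+\int_{0}^{t}(a_{s}-\bar{a}_{s})\,ds+K_{t}=\bar{K}_{t},
\]
with $v-\bar{v}\in H_{G}^{1}$, $a-\bar{a}\in M_{G}^{1}$ and $K,\bar{K}$ non-increasing $G$-martingales; Theorem~\ref{gG-Ito} then forces $\int_{0}^{\cdot}(v_{s}-\bar{v}_{s})\,dB_{s}\equiv0$ and $\int_{0}^{\cdot}(a_{s}-\bar{a}_{s})\,ds\equiv0$, i.e. $v=\bar{v}$ in $H_{G}^{p}$ and $a=\bar{a}$ in $M_{G}^{p}$, which completes the argument.

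I expect the main obstacle to be this last step: verifying rigorously that the $S_{G}^{p}$-limit of the non-increasing $G$-martingales $K^{n}$ is again a non-increasing $G$-martingale with $K_{T}\in L_{G}^{p}$, so that Theorem~\ref{gG-Ito} applies, and the conceptual point that the nonlinearity of $\mathcal{A}_{G}$ makes it necessary to carry along the two limit triples $(u,v,a)$ and $(\bar{u},\bar{v},\bar{a})$ and exploit $u=\bar{u}$, rather than manipulating $u^{n}-v^{n}$ directly. The remaining ingredients — H\"older in time, the BDG-type estimate, and the elementary inequality $x^{p}+y^{p}+z^{p}\le(x+y+z)^{p}$ — are routine.
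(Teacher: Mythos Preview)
Your proposal is correct and follows essentially the same route as the paper's proof: write each $u^{n}$ (and $v^{n}$) via It\^o's formula as a generalized $G$-It\^o process $u^{n}_{t}=u^{n}_{0}+\int_{0}^{t}\mathcal{A}_{G}u^{n}_{s}\,ds+\int_{0}^{t}\mathcal{D}_{x}u^{n}_{s}\,dB_{s}+K^{n}_{t}$, pass to the limit in $S_{G}^{p}$ to obtain two generalized $G$-It\^o decompositions of the same process $u=\bar u$, and invoke Theorem~\ref{gG-Ito} to conclude that the drift and diffusion data coincide. The paper's proof is somewhat terser (it simply asserts ``it is easily seen that $\|K^{n}-K\|_{S_{G}^{p}}\to 0$, so $K$ is a non-increasing $G$-martingale''), while you spell out the BDG/H\"older estimates and the stability of non-increasing $G$-martingales under $S_{G}^{p}$-limits, but the argument is the same.
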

\begin {proof} For $u^n\in {\mathcal{C}}^{\infty}(0,T)$, by It\^o's formula, we have
\begin{align*}
u^n(t,\omega)&  =u^n(0,\omega)+\int_{0}^{t}\mathcal{D}_{s}u^n(s,\omega)ds+\int_{0}^{t}%
\mathcal{D}_{x}u^n(s,\omega)dB_{s}+\frac{1}{2}\int_{0}^{t}\mathcal{D}_{x}^{2}u^n(s,\omega)d\langle
B\rangle_{s}\\
&  =u^n(0,\omega)+\int_{0}^{t}\mathcal{A}_{G}u^n(s,\omega)ds+\int_{0}^{t}%
\mathcal{D}_{x}u^n(s,\omega)dB_{s}+K^n_{t},
\end{align*}
where
 $K^n_{t}:=\frac{1}{2}\int_{0}^{t}\mathcal{D}_{x}^{2}u^n(s,\omega)d\langle B\rangle_{s}%
-\int_{0}^{t}G(\mathcal{D}_{x}^{2}u^n(s,\omega))ds$ is a non-increasing $G$-martingale.
If $(u^n)_{n}$ is  a Cauchy
sequence w.r.t. the metric $d_{G,p}$, there will be processes $u\in S^p_G(0,T)$, $\eta\in M^p_G(0,T)$, $\zeta\in H^p_G(0,T)$ such that
\[\|u^n-u\|_{S^p_G}+\|\mathcal{A}_{G}u^n-\eta\|_{M^p_G}+\|\mathcal{D}_{x}u^n-\zeta\|_{H^p_G}\rightarrow0.\]
Set $K_t=u_t-u_0-\int_0^t\eta_sds-\int_0^t\zeta_s dB_s$. It is easily seen that $\|K^n-K\|_{S^p_G}\rightarrow 0$. So $K$ is a non-increasing $G$-martingale and $u_t=u_0+\int_0^t\eta_sds+\int_0^t\zeta_s dB_s+K_t$ is a generalized $G$-It\^o process. Assuming $(v_n)_n$ is a Cauchy
sequence w.r.t. the metric $d_{G,p}$, similarly, there exists a generalized $G$-It\^o process $\tilde{u}_t=\tilde{u}_0+\int_0^t\tilde{\eta}_sds+\int_0^t\tilde{\zeta}_s dB_s+\tilde{K}_t$ such that \[\|v^n-\tilde{u}\|_{S^p_G}+\|\mathcal{A}_{G}v^n-\tilde{\eta}\|_{M^p_G}+\|\mathcal{D}_{x}v^n-\tilde{\zeta}\|_{H^p_G}\rightarrow0.\]
If $\|u^{n}-v^{n}\|_{S_{G}^{p}%
}\rightarrow0$, we get $u=\tilde{u}$. By the uniqueness of the decomposition for generalized $G$-It\^o processes, we get $\eta=\tilde{\eta}$ and $\zeta=\tilde{\zeta}$, which implies $d_{G,p}(u^{n},v^{n})\rightarrow0$.

\end {proof}

Denote by $W_{\mathcal{A}_{G}}^{\frac{1}%
{2},1;p}(0,T)$ the closure of
${\mathcal{C}}^{\infty}(0,T)$ w.r.t. the metric $d_{G,p}$ in
$S_{G}^{p}(0,T)$. Now the operators ${\mathcal{A}}_{G}, \mathcal{D}_{x}$ can
be continuously extended to the space $W_{\mathcal{A}_{G}}^{\frac{1}%
{2},1;p}(0,T)$.

\begin{proposition}
\label{Sobolev-1st}Assume $u\in S_{G}^{p}(0,T)$. Then the following two
conditions are equivalent:

(i) $u\in W_{\mathcal{A}_{G}}^{\frac{1}%
{2},1;p}(0,T)$;

(ii) There exists  $\eta\in M^p_G(0,T)$ and $\zeta\in H^p_G(0,T)$  such that

\[ u(t, \omega)-\int_{0}^{t}\eta(s, \omega)ds-\int_{0}%
^{t}\zeta(s,\omega)dB_s \] is  a non-increasing $G$-martingale, namely, $u$ is a generalized  $G$-It\^o process.

Moreover, we have ${\mathcal{A}}_{G}u=\eta$ and $\mathcal{D}_{x} u=\zeta$.
\end{proposition}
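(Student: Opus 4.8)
The plan is to prove the two implications separately, with the forward direction (i)$\Rightarrow$(ii) being essentially a restatement of the construction of $W_{\mathcal{A}_G}^{\frac12,1;p}$ via the closability proposition, and the reverse direction (ii)$\Rightarrow$(i) being the substantive one. For (i)$\Rightarrow$(ii): take $u\in W_{\mathcal{A}_G}^{\frac12,1;p}(0,T)$, so there is a sequence $u^n\in\mathcal{C}^\infty(0,T)$ with $d_{G,p}(u^n,u)\to 0$. As in the proof of the closability proposition, It\^o's formula gives $u^n_t=u^n_0+\int_0^t\mathcal{A}_Gu^n_s\,ds+\int_0^t\mathcal{D}_xu^n_s\,dB_s+K^n_t$ with $K^n$ a non-increasing $G$-martingale, and the Cauchy property yields limits $\eta\in M^p_G$, $\zeta\in H^p_G$, $u\in S^p_G$. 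Setting $K_t:=u_t-u_0-\int_0^t\eta_s\,ds-\int_0^t\zeta_s\,dB_s$ one checks $\|K^n-K\|_{S^p_G}\to 0$, so $K$ is a non-increasing $G$-martingale; this is exactly (ii), and $\mathcal{A}_Gu=\eta$, $\mathcal{D}_xu=\zeta$ by the (already established) continuous extension of these operators.

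For (ii)$\Rightarrow$(i): suppose $u_t=u_0+\int_0^t\eta_s\,ds+\int_0^t\zeta_s\,dB_s+K_t$ with $\eta\in M^p_G(0,T)$, $\zeta\in H^p_G(0,T)$ and $K$ a non-increasing $G$-martingale. I would first approximate the data in the natural norms: pick $\eta^n\in M^0(0,T)$ with $\|\eta^n-\eta\|_{M^p_G}\to 0$, $\zeta^n\in M^0(0,T)$ with $\|\zeta^n-\zeta\|_{H^p_G}\to 0$, and, using Theorem 5.4 of \cite{Song11b} (already invoked in the proof of Lemma \ref{MP}), pick $\gamma^n\in M^0(0,T)$ so that $K^n_t:=\frac12\int_0^t\gamma^n_s\,d\langle B\rangle_s-\int_0^tG(\gamma^n_s)\,ds$ satisfies $\mathbb{E}[\sup_{t}|K_t-K^n_t|^p]\to 0$. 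Each candidate $u^n_t:=u_0+\int_0^t(\eta^n_s+G(\gamma^n_s))\,ds+\int_0^t\zeta^n_s\,dB_s+\frac12\int_0^t\gamma^n_s\,d\langle B\rangle_s$ is, after a routine smoothing of the piecewise-constant integrands (mollifying in space so that the $u_k$ become $C^\infty$ with polynomially bounded derivatives), a cylinder path process in $\mathcal{C}^\infty(0,T)$ with $\mathcal{A}_Gu^n=\eta^n+G(\gamma^n)+G(\gamma^n)\ \text{--}$ wait, more precisely $\mathcal{D}_tu^n=\eta^n_s+G(\gamma^n_s)$ and $\mathcal{D}^2_xu^n=\gamma^n$, so $\mathcal{A}_Gu^n=\mathcal{D}_tu^n+G(\mathcal{D}^2_xu^n)=\eta^n_s+G(\gamma^n_s)+G(\gamma^n_s)$; one should instead choose the drift of $u^n$ to be $\eta^n_s$ directly so that $\mathcal{A}_Gu^n=\eta^n_s$, i.e. set $u^n_t:=u_0+\int_0^t\eta^n_s\,ds+\int_0^t\zeta^n_s\,dB_s+K^n_t$, whose It\^o decomposition has drift part $\eta^n_s+G(\gamma^n_s)-G(\gamma^n_s)=\eta^n_s$. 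Then $\mathcal{A}_Gu^n=\eta^n$, $\mathcal{D}_xu^n=\zeta^n$, and $d_{G,p}(u^n,u)^p=\mathbb{E}[\sup_t|u^n_t-u_t|^p+(\int_0^T|\zeta^n_s-\zeta_s|^2ds)^{p/2}+\int_0^T|\eta^n_s-\eta_s|^p\,ds]\to 0$, so $u\in W_{\mathcal{A}_G}^{\frac12,1;p}(0,T)$ and the identification $\mathcal{A}_Gu=\eta$, $\mathcal{D}_xu=\zeta$ follows from continuity of the extended operators (or is reconciled with the first part by the uniqueness of the generalized $G$-It\^o decomposition, Theorem \ref{gG-Ito}).

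The main obstacle I anticipate is the smoothing step: one must produce genuine elements of $\mathcal{C}^\infty(0,T)$ — cylinder path processes whose component functions $u_k$ are $C^\infty$ with all derivatives of at most polynomial growth — that approximate $u^n$ in the metric $d_{G,p}$ without destroying the identities $\mathcal{A}_Gu=\eta^n$, $\mathcal{D}_xu=\zeta^n$. Replacing the indicator-type integrands $\eta^n,\zeta^n,\gamma^n$ (built from functions in $C_{b,Lip}$) by $C^\infty$ polynomially-bounded ones and mollifying the resulting explicit path functional is standard but needs care to control the sup-norm of $u$, the $L^2$-in-time norm of $\mathcal{D}_x$, and the $L^p$-in-time norm of $\mathcal{A}_G$ simultaneously; the key point that makes this legitimate at all is the closability proposition just proved, which guarantees the limit object is well defined and that $\mathcal{A}_G,\mathcal{D}_x$ extend continuously. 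Everything else — the estimate $\|K^n-K\|_{S^p_G}\to 0$, the $G$-martingale property passing to the limit, and the final identification of the derivatives — is routine once the approximation is in place, and the uniqueness theorem \ref{gG-Ito} removes any ambiguity in the representation.
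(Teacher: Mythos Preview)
Your argument for (i)$\Rightarrow$(ii) is correct and matches the paper. For (ii)$\Rightarrow$(i), your overall strategy---reduce via Theorem 5.4 of \cite{Song11b} to $K$ of the form $K^n_t=\frac12\int_0^t\gamma^n_s\,d\langle B\rangle_s-\int_0^tG(\gamma^n_s)\,ds$ with step-process integrands---is the same as the paper's, but there is a genuine gap in the step you flag as an ``obstacle,'' and it is not the one you think it is.

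The problem is not the regularity of the integrands $\eta^n,\zeta^n,\gamma^n$; it is that your candidate $u^n_t=u_0+\int_0^t\eta^n_s\,ds+\int_0^t\zeta^n_s\,dB_s+K^n_t$ contains the term $\int_0^t\gamma^n_s\,d\langle B\rangle_s$, and $\langle B\rangle_t$ is \emph{not} a cylinder function of the path: it cannot be written as $\varphi(\omega(t_1),\dots,\omega(t_n))$ for any finite set of times. No amount of mollifying the integrands will turn this into an element of $\mathcal{C}^\infty(0,T)$. Consequently your $u^n$ is not in $\mathcal{C}^\infty(0,T)$, the operators $\mathcal{A}_G,\mathcal{D}_x$ are not yet defined on it, and the computation ``$\mathcal{A}_Gu^n=\eta^n$'' is circular.

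The paper resolves this with a specific trick: replace $\langle B\rangle_t$ by the discrete quadratic variation $Q^n_t:=\sum_k(B_{t^n_{k+1}\wedge t}-B_{t^n_k\wedge t})^2$, which \emph{is} a cylinder path process, and which satisfies $Q^n_t=\int_0^t\lambda^n_s\,dB_s+\langle B\rangle_t$ with $\lambda^n_t=2\sum_k(B_t-B_{t^n_k})1_{]t^n_k,t^n_{k+1}]}(t)$. Defining $u^n$ with $\frac12\int_0^t w_s\,dQ^n_s$ in place of $\frac12\int_0^t w_s\,d\langle B\rangle_s$ gives a genuine element of $\mathcal{C}^\infty(0,T)$ (first representation), while the expansion $dQ^n=\lambda^n\,dB+d\langle B\rangle$ lets one read off $\mathcal{D}_tu^n,\mathcal{D}_xu^n,\mathcal{D}_x^2u^n$ from the $G$-It\^o decomposition (second representation) and compute $\mathcal{A}_Gu^n=\eta-\alpha^n+G(w)\to\eta$. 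This is the missing idea in your proof.
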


\begin{proof}
(i)$\implies$(ii) is obvious.

(ii)$\implies$(i). Let $u$ be a generalized $G$-It\^o process. By Theorem 5.4 in \cite {Song11b}, it suffices to prove the claim for $u$ of the following
form:
\[
u_t=u_0+\int_{0}^{t}\eta_{s}ds+\int_{0}^{t}\zeta_{s}%
dB_{s}+\frac{1}{2}\int_{0}^{t}w_{s}d\langle B\rangle_{s}-\int_{0}^{t}%
G(w_{s})ds,
\]
 where $\eta,\zeta,w$ are smooth step processes, namely, $\eta, \zeta, w\in M^0(0,T)$ and $\eta_t, \zeta_t, w_t\in C^\infty(\Omega_t)$ (Def. \ref{cylinderF}).  Set $t_{k}^{n}=\frac{kT}{2^{n}}$ and
$
Q^{n}_t:=\sum_{k=0}^{2^{n}-1}(B_{t_{k+1}^{n}\wedge t}-B_{t_{k}%
^{n}\wedge t})^{2}=\int_{0}^{t}\lambda^{n}_sdB_{s}+\langle
B\rangle_{t},
$
where $\lambda^{n}_t=\sum_{k=0}^{2^{n}-1}2(B_{t}-B_{t_{k}}%
)1_{]t_{k},t_{k+1}]}(t)$. Choose a sequence of smooth step processes $\alpha_{s}^{n}$
such that
$\mathbb{E}[\int_{0}^{T}|\alpha_{s}^{n}-G(w_{s})|^{p}ds]\rightarrow0.$
Set
\begin{eqnarray}
\label {gGIto-e1}u^n_t:=& &
u_{0}+\int_{0}^{t}\eta_{s}ds+\int_{0}^{t}\zeta_{s}dB_{s}+\frac
{1}{2}\int_{0}^{t}w_{s}dQ_{s}^{n}-\int_{0}^{t}\alpha_{s}^{n}ds\\
\label {gGIto-e2}=& &u_{0}+\int_{0}^{t}(\eta_{s}-\alpha_{s}^{n})ds+\int_{0}^{t}(\zeta_{s}+\frac
{1}{2}w_{s}\lambda_{s}^{n})dB_{s}+\int_{0}^{t}\frac{1}{2}w_{s}d\langle
B\rangle_{s}.
\end{eqnarray}
It is easily seen, by (\ref{gGIto-e1}), that
$u^{n}$ belongs to ${\mathcal{C}}^{\infty}(0,T)$. By the uniqueness of the decomposition for $G$-It\^{o} processes and (\ref{gGIto-e2}) we know that
\begin{align*}
\mathcal{D}_{t}u^{n}_t=\eta_{t}-\alpha_{t}^{n}, \
\mathcal{D}_{x}u^{n}_t=\zeta_{t}+\frac{1}{2}w_{t}\lambda_{t}^{n}, \
\mathcal{D}_{x}^{2}u^{n}_t=w_{t}.
\end{align*}
So
${\mathcal{A}}_{G}u^{n}_t=\eta_{t}-\alpha_{t}^{n}+G(w_{t})$.
It is easy to show that
$\mathcal{A}_{G}u^{n}\stackrel {M^p_G}{\longrightarrow}\eta, \ D_{x}u^{n}\stackrel {H^p_G}{\longrightarrow} \zeta, \ u^{n}\stackrel {S^p_G}{\longrightarrow} u.$
So $u\in W_{\mathcal{A}_G}^{\frac{1}{2},1;p}(0,T)$ with ${\mathcal{A}}_{G}u=\eta$ and
$\mathcal{D}_{x}u=\zeta$.
\end{proof}

\subsubsection{Fully nonlinear path dependent PDEs}

Let us define the $W^{\frac{1}{2},1,p}_{\mathcal{A}_G}$-solution to the path dependent PDE
(\ref{fPPDE-special}) : to find $u\in W_{\mathcal{A}_{G}}^{\frac{1}%
{2},1;p}(0,T)$ such that
\begin{align}
\begin {split}\label {fPPDE-special-weak}
{\mathcal{A}}_{G}u+f(t,u,\mathcal{D}_{x}u)
&
=0,\ \ t\in\lbrack0,T),\\
u_T  &  =\xi.
\end {split}
\end{align}
Now we can interpret backward SDEs driven by $G$-Brownian motion as ``path dependent" PDEs.

We assume that
$f(t,\omega,Y_{t},Z_{t})\in M_{G}^{p}(0,T)$ for any $(Y,Z)\in
S_{G}^{p}(0,T)\times H_{G}^{p}(0,T)$.
\begin{theorem}
\label{cor-1st} Let  $(Y,Z)$ be  a solution to the backward
SDE (\ref{GBSDE-special}) (see Def. \ref {defA1} and Rem. \ref {remark-A1}). Then we have $u_t:=Y_{t}\in W_{\mathcal{A}_{G}}^{\frac{1}%
{2},1;p}(0,T)$ with
$\mathcal{D}_{x}u_t=Z_{t}$.

Moreover, Given a $u \in W_{\mathcal{A}_{G}}^{\frac{1}%
{2},1;p}(0,T)$, the following conditions
are equivalent:

(i) $(u, \mathcal{D}_{x} u)$ is a solution to the backward SDE (\ref{GBSDE-special});

(ii) $u$ is a $W_{\mathcal{A}_{G}}^{\frac{1}%
{2},1;p}(0,T)$-solution to the path dependent PDE
(\ref{fPPDE-special-weak}).
\end{theorem}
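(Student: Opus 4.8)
\textbf{Proof proposal for Theorem \ref{cor-1st}.}
The plan is to reduce the assertions to Proposition \ref{Sobolev-1st} together with the wellposedness of the $G$-BSDE (\ref{GBSDE-special}) recalled from \cite{HJPS}, and then invoke the uniqueness of the decomposition for generalized $G$-It\^o processes (Theorem \ref{gG-Ito}) to pin down the derivatives. First I would take a solution $(Y,Z)$ (equivalently $(Y,Z,K)$, where $K_t=\xi+\int_0^T f(s,Y_s,Z_s)ds-\int_0^TZ_sdB_s-Y_0-\cdots$ is forced to be a non-increasing $G$-martingale by the defining equation) of (\ref{GBSDE-special}). Rewriting (\ref{GBSDE-special}) in forward form gives
\[
Y_t=Y_0-\int_0^t f(s,Y_s,Z_s)ds+\int_0^t Z_sdB_s+K_t,
\]
with $Y\in S^p_G(0,T)$, $Z\in H^p_G(0,T)$, $-f(\cdot,Y,Z)\in M^p_G(0,T)$ by the standing assumption on $f$, and $K$ a non-increasing $G$-martingale. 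Thus $Y$ is exactly a generalized $G$-It\^o process, so Proposition \ref{Sobolev-1st} applies directly and yields $u:=Y\in W^{\frac12,1;p}_{\mathcal{A}_G}(0,T)$ together with $\mathcal{A}_Gu=-f(\cdot,Y,Z)$ and $\mathcal{D}_xu=Z$. In particular $\mathcal{D}_xu_t=Z_t$, which is the first claim.

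For the equivalence, assume first (i), i.e.\ $(u,\mathcal{D}_xu)$ solves (\ref{GBSDE-special}); then by the computation just made $\mathcal{A}_Gu=-f(t,u,\mathcal{D}_xu)$ in $M^p_G(0,T)$ and $u_T=\xi$, which is precisely (\ref{fPPDE-special-weak}), so (ii) holds. Conversely, assume (ii): $u\in W^{\frac12,1;p}_{\mathcal{A}_G}(0,T)$ with $\mathcal{A}_Gu+f(t,u,\mathcal{D}_xu)=0$ and $u_T=\xi$. By Proposition \ref{Sobolev-1st} (the (i)$\implies$(ii) direction), $u$ is a generalized $G$-It\^o process $u_t=u_0+\int_0^t\eta_sds+\int_0^t\zeta_sdB_s+K_t$ with $\eta=\mathcal{A}_Gu$, $\zeta=\mathcal{D}_xu$ and $K$ non-increasing. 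Substituting $\eta=-f(t,u,\mathcal{D}_xu)$ and evaluating between $t$ and $T$ using $u_T=\xi$ gives
\[
u_t=\xi+\int_t^T f(s,u_s,\mathcal{D}_xu_s)ds-\int_t^T \mathcal{D}_xu_s\,dB_s-(K_T-K_t),
\]
which is (\ref{GBSDE-special}) for $(Y,Z,K)=(u,\mathcal{D}_xu,K)$, i.e.\ (i).

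The only genuinely delicate point is the well-definedness used implicitly throughout: that $\mathcal{A}_Gu$ and $\mathcal{D}_xu$ are unambiguous for $u\in W^{\frac12,1;p}_{\mathcal{A}_G}(0,T)$, so that the two sides of (\ref{fPPDE-special-weak}) can be compared as genuine elements of $M^p_G(0,T)$ and $H^p_G(0,T)$. This is exactly the content of the closability established in the Proposition preceding the definition of $W^{\frac12,1;p}_{\mathcal{A}_G}(0,T)$, which in turn rests on the uniqueness of the decomposition for generalized $G$-It\^o processes (Theorem \ref{gG-Ito}); once that is in hand the rest is bookkeeping. A secondary point to be careful about is matching conventions: one must check that ``solution of (\ref{GBSDE-special}) in the sense of \cite{HJPS}'' (the $(Y,Z,K)$ formulation with $K$ a non-increasing $G$-martingale, recalled in Section 5) forces precisely $-f(\cdot,Y,Z)=\mathcal{A}_Gu$ and not merely an a.e.\ identity up to an added $G$-martingale term — but this too is immediate from Theorem \ref{gG-Ito}, since any ambiguity would be a non-increasing $G$-martingale of finite variation that is simultaneously an $ds$-integral, hence $\equiv 0$ by Theorem \ref{Mthm}.
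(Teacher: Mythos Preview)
Your proposal is correct and follows essentially the same approach as the paper, which simply states that the result is immediate from Proposition \ref{Sobolev-1st} together with the definitions of solutions to (\ref{GBSDE-special}) and (\ref{fPPDE-special-weak}). You have merely spelled out the bookkeeping (the forward-form rewriting of the $G$-BSDE as a generalized $G$-It\^o process and the identification $\mathcal{A}_Gu=-f$, $\mathcal{D}_xu=Z$) that the paper leaves implicit.
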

\begin {proof} The proof follows immediately from Proposition \ref {Sobolev-1st} and the definitions of the solutions to the backward SDE (\ref{GBSDE-special}) and the path dependent PDE
(\ref{fPPDE-special-weak}).
\end {proof}

Assume that the function $g(t,\omega ,y,z):[0,T]\times \Omega_{T}\times R\times
R\rightarrow R$ satisfies the following assumptions: there exists some
$\beta>1$ such that

\begin{description}
\item[(H1)] for any $y$, $z$, $g(t,\omega ,y,z)\in M_{G}^{\beta
}(0,T)$;

\item[(H2)] $|g(t,\omega ,y,z)-g(t,\omega ,y^{\prime},z^{\prime
})|\leq L(|y-y^{\prime}|+|z-z^{\prime}|)$ for some constant $L>0$.
\end{description}

\begin{corollary}
 Assume $\xi \in L_{G}^{\beta}(\Omega_{T})$ and $g$ satisfies
(H1) and (H2) for some $\beta>1$. Then, for each
$p\in(1,\beta)$, the path dependent PDE (\ref{fPPDE-special}) has a
unique $W_{\mathcal{A}_{G}}^{\frac{1}{2},1;p}$-solution $u$.

In particular, the martingale $u(t,\omega):={\mathbb{E}}_t[\xi](\omega)$ is the unique $W_{\mathcal{A}_{G}}^{\frac{1}{2},1;p}$-solution of the path dependent $G$-heat equation
\[
\mathcal{D}_tu+G(\mathcal{D}_x^2u)
=0,\,\,\,\, u_T=\xi.
\]
\end{corollary}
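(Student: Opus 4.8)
The plan is to read this corollary off the dictionary between $W_{\mathcal{A}_{G}}^{\frac12,1;p}$-solutions of the path dependent PDE and solutions of the $G$-BSDE (\ref{GBSDE-special}), which is Theorem \ref{cor-1st}, combined with the well-posedness of $G$-BSDEs proved in \cite{HJPS} and recalled in Section 5. Essentially nothing new has to be done: the corollary is a consequence of these two facts, once one checks that their hypotheses match.

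Fix $p\in(1,\beta)$ and take $f=g$ in (\ref{GBSDE-special}). Since $\xi\in L_{G}^{\beta}(\Omega_{T})$ and $g$ satisfies (H1)--(H2) with this $\beta$, the well-posedness theorem for $G$-BSDEs (Section 5, from \cite{HJPS}) yields a \emph{unique} solution $(Y,Z,K)$ of (\ref{GBSDE-special}), with $Y\in S_{G}^{p}(0,T)$, $Z\in H_{G}^{p}(0,T)$ and $K$ a non-increasing $G$-martingale. By (H2) together with $Y\in S_{G}^{p}$, $Z\in H_{G}^{p}$ one has $g(\cdot,\omega,Y_{\cdot},Z_{\cdot})\in M_{G}^{p}(0,T)$, so the standing assumption on $f$ preceding Theorem \ref{cor-1st} is fulfilled and $(Y,Z)$ is a solution of the backward SDE (\ref{GBSDE-special}) in the sense of Def. \ref{defA1}. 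Now Theorem \ref{cor-1st} applies: its first assertion gives $u:=Y\in W_{\mathcal{A}_{G}}^{\frac12,1;p}(0,T)$ with $\mathcal{D}_{x}u=Z$, and the equivalence (i)$\Longleftrightarrow$(ii) then says that $u$ is a $W_{\mathcal{A}_{G}}^{\frac12,1;p}$-solution of (\ref{fPPDE-special-weak}), i.e. of (\ref{fPPDE-special}); this is existence. For uniqueness, let $v\in W_{\mathcal{A}_{G}}^{\frac12,1;p}(0,T)$ be any $W_{\mathcal{A}_{G}}^{\frac12,1;p}$-solution of (\ref{fPPDE-special}); by (ii)$\Longrightarrow$(i) the pair $(v,\mathcal{D}_{x}v)$ solves (\ref{GBSDE-special}), so by the uniqueness of the $G$-BSDE solution $v=Y=u$ (and $\mathcal{D}_{x}v=Z$).

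For the last statement, take $g\equiv0$, which trivially satisfies (H1)--(H2) with every $\beta>1$; then (\ref{fPPDE-special}) is the path dependent $G$-heat equation $\mathcal{D}_{t}u+G(\mathcal{D}_{x}^{2}u)=0$, $u_{T}=\xi$. By the $G$-martingale decomposition theorem (\cite{STZ11}, \cite{Song11}), $\mathbb{E}_{t}[\xi]=\mathbb{E}[\xi]+\int_{0}^{t}Z_{s}dB_{s}+K_{t}$ with $Z\in H_{G}^{p}(0,T)$ and $K$ a non-increasing $G$-martingale; writing $Y_{t}=\mathbb{E}_{t}[\xi]$ and using $\xi=Y_{T}=\mathbb{E}[\xi]+\int_{0}^{T}Z_{s}dB_{s}+K_{T}$ one checks at once that $Y_{t}=\xi-\int_{t}^{T}Z_{s}dB_{s}-(K_{T}-K_{t})$, i.e. $(Y,Z)$ solves (\ref{GBSDE-special}) with $f=0$. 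Hence, by the uniqueness part above, $u(t,\omega)=\mathbb{E}_{t}[\xi](\omega)$ is the unique $W_{\mathcal{A}_{G}}^{\frac12,1;p}$-solution of the path dependent $G$-heat equation. The only delicate point in the whole argument is the matching of integrability exponents — choosing the index $p<\beta$ in the $G$-BSDE well-posedness theorem and verifying $g(\cdot,Y,Z)\in M_{G}^{p}$ so that Theorem \ref{cor-1st} is applicable — which is routine given (H1)--(H2); the substantive content is already carried by Theorem \ref{cor-1st} (hence ultimately by Proposition \ref{Sobolev-1st} and by the uniqueness of the decomposition for generalized $G$-It\^o processes, Theorem \ref{gG-Ito}) and by the well-posedness result of \cite{HJPS}.
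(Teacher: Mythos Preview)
Your proof is correct and follows essentially the same route as the paper: existence and uniqueness are read off from the equivalence in Theorem~\ref{cor-1st} combined with the well-posedness of $G$-BSDEs (Theorem~\ref{thmA}), after checking $g(\cdot,Y,Z)\in M_G^p$ via (H1)--(H2); the heat-equation case is handled by taking $g\equiv 0$ and invoking the $G$-martingale decomposition theorem. The paper's argument is identical in structure, just more tersely stated.
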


\begin{proof}
The uniqueness is straightforward from Theorem \ref{cor-1st} and Theorem \ref {thmA}.

We now prove the existence. By Theorem \ref {thmA} we know that the backward SDE (\ref{GBSDE-special}%
) has a solution $(Y,Z)$. By the assumption (H1) and
(H2), we conclude $g(t,\omega ,Y_{t}(\omega),Z_{t}(\omega))\in
M_{G}^{p}(0,T)$. So we get the existence result from Theorem \ref{cor-1st}.

By the $G$-martingale decomposition theorem, $u\in S_{G}^{p}(0,T)$ is a
$G$-martingale if and only if $u$ is a solution of backward SDE
(\ref{GBSDE-special}) with $f=0$.  So $u(t,\omega):={\mathbb{E}}_t[\xi](\omega)$ is the unique $W_{\mathcal{A}_{G}}^{\frac{1}{2},1;p}$-solution of the path dependent $G$-heat equation.
\end{proof}

\section{Appendix: Backward SDEs driven by $G$-Brownian motion}

In \cite{HJPS} the authors studied the backward stochastic differential
equations driven by a $G$-Brownian motion $(B_{t})_{t\geq0}$ in the following
form:
\begin{equation}
Y_{t}=\xi+\int_{t}^{T}g(s,Y_{s},Z_{s})ds-\int_{t}^{T}Z_{s}dB_{s}-(K_{T}%
-K_{t}). \label{equA}%
\end{equation}
where $K$ is a non-increasing $G$-martingale.

The main result in \cite {HJPS} is the existence and uniqueness of a solution
$(Y,Z, K)$ for equation (\ref{equA}) in the $G$-framework under the following
assumption: there exists some $\beta>1$ such that (H1) and (H2) are satisfied.

\begin{definition}
\label{defA1} Let $\xi \in L_{G}^{\beta}(\Omega_{T})$ and $g$ satisfy (H1) and
(H2) for some $\beta>1$. A triplet of processes $(Y,Z,K)$ is called a solution
of equation (\ref{equA}) if for some $1<\alpha \leq \beta$ the following
properties hold:

\begin{description}
\item[(a)] $Y\in S_{G}^{\alpha}(0,T)$, $Z\in H_{G}^{\alpha}(0,T)$, $K$ is a
non-increasing $G$-martingale with $K_{0}=0$ and $K_{T}\in L_{G}^{\alpha
}(\Omega_{T})$;

\item[(b)] $Y_{t}=\xi+\int_{t}^{T}g(s,Y_{s},Z_{s})ds-\int_{t}^{T}Z_{s}%
dB_{s}-(K_{T}-K_{t})$.
\end{description}
\end{definition}

The main result in \cite {HJPS} is the following theorem:

\begin{theorem}
\label{thmA} Assume that $\xi \in L_{G}^{\beta}(\Omega_{T})$ and $f$ satisfies
(H1) and (H2) for some $\beta>1$. Then equation (\ref{equA}) has a unique
solution $(Y,Z,K)$. Moreover, for any $1<\alpha<\beta$ we have $Y\in
S_{G}^{\alpha}(0,T)$, $Z\in H_{G}^{\alpha}(0,T)$ and $K_{T}\in L_{G}^{\alpha
}(\Omega_{T})$.
\end{theorem}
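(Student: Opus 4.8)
The plan is to establish existence and uniqueness by the two classical pillars of Lipschitz BSDE theory transplanted to the $G$-framework: \emph{a priori estimates} together with a \emph{Picard/contraction argument}, the essential new input being the $G$-martingale representation theorem of \cite{STZ11}, \cite{Song11} quoted in Section~2; uniqueness will be an immediate consequence of a stability estimate applied to the difference of two solutions, so the real work is existence. First I would treat the generator-free case $g=g(s,\omega)$. Since $g\in M_G^\beta(0,T)$ forces $\int_0^T|g_s|\,ds\in L_G^\beta(\Omega_T)$ (a H\"older estimate on $[0,T]$), the variable $\widehat\xi:=\xi+\int_0^Tg_s\,ds$ lies in $L_G^\beta(\Omega_T)$, and applying the representation theorem to $\mathbb{E}_t[\widehat\xi]$ yields $Z\in H_G^\alpha(0,T)$ and a non-increasing $G$-martingale $K$ with $K_T\in L_G^\alpha(\Omega_T)$, for every $\alpha\in[1,\beta)$, with $\mathbb{E}_t[\widehat\xi]=\mathbb{E}[\widehat\xi]+\int_0^tZ_s\,dB_s+K_t$; then $Y_t:=\mathbb{E}_t[\widehat\xi]-\int_0^tg_s\,ds$ solves (\ref{equA}) and $Y\in S_G^\alpha(0,T)$ by standard estimates on conditional $G$-expectations.

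\textbf{A priori estimates.} For a solution $(Y,Z,K)$ with Lipschitz $g$, I would prove, for $1<\alpha<\beta$,
\[\mathbb{E}\Big[\sup_{t\in[0,T]}|Y_t|^\alpha+\Big(\int_0^T|Z_s|^2\,ds\Big)^{\alpha/2}+|K_T|^\alpha\Big]\le C\,\mathbb{E}\Big[|\xi|^\beta+\Big(\int_0^T|g(s,0,0)|\,ds\Big)^\beta\Big]^{\alpha/\beta},\]
with $C=C(\alpha,\beta,T,L,\underline\sigma,\overline\sigma)$. This comes from It\^o's formula applied to $|Y_t|^2$ in the $G$-sense: the $y$- and $z$-dependence of $g$ is absorbed via the Lipschitz bound and Young's inequality, the martingale part is handled by the $G$-version of the Burkholder--Davis--Gundy inequality, and the term $\int_0^TY_s\,dK_s$ produced by the non-increasing $G$-martingale is dominated by $\sup_s|Y_s|\cdot|K_T|$, where $|K_T|$ is itself bounded self-consistently by reading $K_T=\xi+\int_0^Tg(s,Y_s,Z_s)\,ds-Y_0-\int_0^TZ_s\,dB_s$ off (\ref{equA}) at $t=0$. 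Running the same computation on the difference of two solutions, with their respective data, gives the stability estimate, hence uniqueness.

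\textbf{Picard iteration and concatenation.} For general $g$ satisfying (H1)--(H2), I would iterate: from $(Y^0,Z^0)=(0,0)$, let $(Y^{n+1},Z^{n+1},K^{n+1})$ be the generator-free solution for the driver $s\mapsto g(s,Y^n_s,Z^n_s)$, which by (H1)--(H2) lies in $M_G^\gamma(0,T)$ for every $\gamma<\beta$ (using $g(\cdot,0,0)\in M_G^\beta$ and the fact, preserved along the iteration, that $Y^n\in S_G^\gamma$, $Z^n\in H_G^\gamma$ for all $\gamma<\beta$), so the first step keeps each iterate in $S_G^\alpha\times H_G^\alpha$ for all $\alpha<\beta$ without genuine loss of integrability. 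Fixing $\alpha_0\in(1,\beta)$ and working first on a short interval $[T-\delta,T]$ with $\delta$ small depending only on $L,\underline\sigma,\overline\sigma,\alpha_0$, the stability estimate makes $(Y^n,Z^n)$ Cauchy in $S_G^{\alpha_0}\times H_G^{\alpha_0}$ there; its limit, with the likewise Cauchy $K^n$, solves (\ref{equA}) on $[T-\delta,T]$. Since $\delta$ does not depend on the terminal data, one repeats backwards on $[T-2\delta,T-\delta]$, then $[T-3\delta,T-2\delta]$, and so on, and pastes the pieces together — the $Z$'s concatenate, the non-increasing $G$-martingales add — to get the global solution on $[0,T]$ with the asserted integrability for every $1<\alpha<\beta$.

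\textbf{Main obstacle.} The crux is the a priori estimate, above all the simultaneous control of $\|K_T\|_{L_G^\alpha}$: in the $G$-setting one may not take $\alpha=\beta$ (this loss is already intrinsic to the representation theorem), the quadratic variation $\langle B\rangle$ is genuinely random, and $K$ — a non-increasing $G$-martingale rather than a classical finite-variation process — must be handled using that it \emph{is} a $G$-martingale, which couples the bounds on $Y$, $Z$ and $K_T$ so that they can only be closed jointly. Carrying this out uniformly along the Picard scheme, with the right bookkeeping of exponents, is the heart of the proof; the full argument is given in \cite{HJPS}.
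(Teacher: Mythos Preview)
The paper does not prove this theorem at all: it appears in the Appendix (Section~5) as a quoted result from \cite{HJPS}, prefaced by ``The main result in \cite{HJPS} is the following theorem,'' and no argument is given. Your proposal is a faithful outline of the strategy actually carried out in \cite{HJPS} --- solving the generator-free case via the $G$-martingale representation theorem, deriving a priori and stability estimates by applying It\^o's formula to $|Y|^\alpha$ (or $|Y|^2$), and then running a Picard iteration with backward concatenation on short intervals --- and you correctly acknowledge at the end that the full details live in that reference. So your sketch goes well beyond what the present paper supplies, and is consistent with the cited source; there is nothing to correct.
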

\begin {remark} \label {remark-A1} Equivalently, we say a pair of processes $(Y,Z)$ is  a solution
of equation (\ref{equA}) if for some $1<\alpha \leq \beta$ the following
properties hold:

\begin{description}
\item[(a)] $Y\in S_{G}^{\alpha}(0,T)$, $Z\in H_{G}^{\alpha}(0,T)$;

\item[(b)] $Y_T=\xi$ and $K_t:=Y_t+\int_{0}^{t}g(s,Y_{s},Z_{s})ds-\int_{0}^{t}Z_{s}%
dB_{s}$  is a
non-increasing $G$-martingale.
\end{description}
\end {remark}


\renewcommand{\refname}{\large References}{\normalsize \ }

\end{document}